\newcommand{\eqref}[1]{(\ref{#1})}
\newcommand{\eps}{\varepsilon}
\newcommand{\E}{\mathbb{E}}
\newcommand{\R}{\mathbb{R}}
\newcommand{\ind}{\mathbh{1}}
\renewcommand{\P}{\mathbb{P}}
\newcommand{\dd}{\mathrm{d}}
\newcommand{\var}{\operatorname{var}}
\newtheorem{theorem}{Theorem}[section]
\newtheorem{corollary}{Corollary}[section]
\newtheorem{proposition}{Proposition}[section]
\newtheorem{lemma}{Lemma}[section]
\begin{document}
\begin{frontmatter}

\title{Small time Chung-type LIL for L\'evy processes}
\runtitle{Chung-type LIL for L\'evy processes at zero}

\begin{aug}
%%%% inicialai - be tarpu
\author[1]{\fnms{Frank} \snm{Aurzada}\corref{}\thanksref{1}\ead[label=e1]{aurzada@math.tu-berlin.de}},
\author[2]{\fnms{Leif} \snm{D\"oring}\thanksref{2}\ead[label=e2]{leif.doering@googlemail.com}} \and
\author[3]{\fnms{Mladen} \snm{Savov}\thanksref{3}\ead[label=e3]{savov@stats.ox.ac.uk}}
\runauthor{F. Aurzada, L. D\"oring and M. Savov} %% auto
\address[1]{Technische Universit\"at Berlin, Institut f\"ur Mathematik,
Sekr. MA 7-4, Stra\ss e des 17. Juni 136, 10623 Berlin, Germany.
\printead{e1}}
\address[2]{Department of Statistics, University of Oxford, 1, South
Parks Road, Oxford OX1 3TG, UK.\hfill\break\printead{e2}}
\address[3]{New College, University of Oxford, Holywell Street, Oxford
OX1 3BN, UK.\\ \printead{e3}}
\end{aug}

% HISTORY:
\received{\smonth{1} \syear{2011}}
\revised{\smonth{8} \syear{2011}}

% ABSTRACT
%
\begin{abstract}
We prove Chung-type laws of the iterated logarithm for general L\'evy
processes at zero. In particular, we provide tools to translate small
deviation estimates directly into laws of the iterated logarithm.

This reveals laws of the iterated logarithm for L\'evy processes at
small times in many concrete examples. In some cases, exotic norming
functions are derived.
\end{abstract}

% KEYWORDS
%
\begin{keyword}
\kwd{law of the Iterated Logarithm}
\kwd{L\'evy process}
\kwd{small ball problem}
\kwd{small deviations}
\end{keyword}

\end{frontmatter}
%

%s1 #&#
%s1 ###
\section{Introduction}\label{sec1}
A classical question in stochastic process theory is to understand
the asymptotic behavior of a given stochastic process $X=(X_t)_{t\geq
0}$ on the level of paths. In the present work, we consider general L\'
evy processes and find Chung-type LIL (laws of the iterated logarithm)
at zero; that is, given the L\'evy process $X$, we aim at
characterizing a norming function $b$, satisfying
\begin{equation}\label{LIL}
\liminf_{t\rightarrow0}\frac{\Vert X\Vert_t}{b(t)}=1,
\qquad\mbox{where }\Vert X\Vert_t:=\sup_{0\leq s\leq t}|X_s|.
\end{equation}
The topic of large and small time fluctuations of L\'evy processes has
been studied extensively in the past (see, e.g., Doney \cite{D}
for an overview and Bertoin \cite{B96}, Sato \cite{sato},
Bertoin, Doney and Maller \cite{BDM08}).

It is well known that, via the Borel--Cantelli lemma, Chung-type LIL
for a general stochastic process are connected to the so-called small
deviation rate of the process, that is,
\begin{equation}\label{small}
-\log\P(\Vert X\Vert_t\leq\eps),\qquad\mbox{as $\eps\to0$ and
$t\to0$}.
\end{equation}
The main motivation for this paper originates from the recent work
Aurzada and Dereich \cite{AD09}, where a framework for obtaining the
small deviation rate (\ref{small}) for general L\'evy processes (but
fixed $t$) is provided. The difficulty in passing over from the small
deviation estimate to the respective LIL concerns circumventing the
independence assumption of the Borel--Cantelli lemma.

In this paper we show how the asymptotics of (\ref{small}) imply
explicit LIL. We stress that it is not sufficient to have estimates for
(\ref{small}) for \textit{fixed} $t$, which usually are referred to as
small deviation estimates.

Small deviation problems are studied independently of LIL and have
connections to other fields, such as the approximation of stochastic
processes, coding problems, the path regularity of the process, limit
laws in statistics and entropy numbers of linear operators. We refer to
the surveys Li and Shao \cite{lishao}, Lifshits \cite{lif}, for an
overview of the field, and to
Lifshits \cite{sdbib}, for a regularly updated list of references,
which also includes references to laws of the iterated logarithm of
Chung type. The papers of Taylor \cite{taylor}, Mogul'ski\u\i\ \cite
{mogulskii}, Borovkov and Mogul'ski\u\i\ \cite{bormog},
Simon \cite{simon01,simonpvar}, Linde and Shi \cite{lindeshi},
Lifshits and Simon \cite{ls}, Linde and Zipfel \cite
{lindezipfel}, Shmileva \cite{elena}, Shmileva \cite{elena2} provide
a good source for
earlier results on small deviations of L\'evy processes.

We now discuss LIL for special L\'evy processes that have already
appeared in the literature. The norming function $b(t)=\sqrt{\uppi^2 t/
(8\log|\log t|)}$ for a standard Brownian motion can be derived from
the large time LIL, proved by Chung \cite{chung}, via time inversion.
For any L\'evy process with non-trivial Brownian component, the recent
result of Buchmann and Maller \cite{BM09} shows that (\ref{LIL}) holds
with the same norming function as for a standard Brownian motion. If
$X$ is an $\alpha$-stable L\'evy process, (\ref{LIL}) holds with
norming function $b(t)= ( c_\alpha t/\log|\log t|)^{1/\alpha}$, which
goes back to Taylor \cite{taylor}. The question was studied for
subordinators already in \cite{Fristedt}; there, the norming function can be
obtained from the Laplace transform.

Of course, it is natural to ask for the general structure of the
norming function for arbitrary L\'evy processes not having the special
features of the examples mentioned so far.
LIL for more general L\'evy processes were obtained in Wee \cite{W88};
see Wee \cite{wee2} for more examples. It was shown that if, for some
positive constant $\theta$,
\begin{equation} \label{eqn:weecondition}
\P(X_t>0)\geq\theta\quad\mbox{and}\quad\P(X_t<0)\geq\theta
\qquad\mbox{for all $t$
sufficiently small,}
\end{equation}
holds, then upper and lower bounds in the LIL hold in the following
sense: for $\lambda_1$ sufficiently small and $\lambda_2$
sufficiently large,
\[
1\leq\liminf_{t\rightarrow0}\frac{\Vert X\Vert_t}{b_{\lambda_1}(t)}
\quad\mbox{and}\quad\liminf_{t\rightarrow0}\frac{\Vert X\Vert
_t}{b_{\lambda_2}(t)}\leq1
\]
for norming functions $b_{\lambda}$ given by
\[
b_\lambda(t):=f^{-1} \biggl(\frac{\log|\log t|}{\lambda t} \biggr),
\]
where $f$ is given by some explicit, but complicated expression
depending on the L\'evy triplet.

Although the results of Wee are quite general, there are some points
which we aim to improve in the present work.
First, we try to demonstrate and explain clearly how the LIL follow
from small deviation estimates of type (\ref{small}) and which behavior
of the process is actually responsible for the correct norming function.
Second, we attempt to control the unspecified (and suboptimal)
constants $\lambda_1$ and $\lambda_2$ above, which can influence the
norming function essentially (see (\ref{eqn:constinexp}) below for an
example of influence on the exponential level) in the case when
$b_{\lambda}$ is not regularly varying at zero. In our approach, we
keep track of the appearing constants in an optimal way. This allows
us, in the case of known strong small deviation order, to transfer the
constant in the strong small deviation order to the limiting constant in
the LIL.
Third, we provide alternative conditions to~(\ref{eqn:weecondition})
which are explicit in terms of the L\'evy triplet. We believe our
conditions to be weaker than (\ref{eqn:weecondition}), but, as
necessary and sufficient conditions for the latter in terms of the L\'
evy triplet seem to be unknown in general, it is difficult to verify
our claim, although our examples hint at this direction.

This paper is structured as follows. In Section~\ref{sec:results}, we
give the main results that manage the transfer between small deviations
and LIL. Several examples of LIL for concrete L\'evy processes are
collected in Section~\ref{sec:examples}. The proofs are given in
Section~\ref{sec:proofs}.

Let us finally fix some notation. In this paper we let $X$ be a L\'evy
process with characteristic triplet $(\gamma, \sigma^2, \Pi)$, where
$\gamma\in\R$, $\sigma^2\geq0$, and the L\'evy measure $\Pi$ has no
atom at zero and satisfies
\[
\int(1\wedge x^2)\Pi(\dd x)<\infty.
\]
For basic definitions and properties of L\'evy processes we refer to
Bertoin \cite{B96}, Sato \cite{sato}. As we are interested only in
the behavior for small
times, we discard all jumps bigger than $1$ in absolute value and
assume such truncation \textit{throughout} the paper. Hence, the
characteristic exponent, $\E\mathrm{e}^{\mathrm{i} z X_t } =:
\mathrm{e}^{ t \psi(z)}$, has the form
\[
\psi(z)=\mathrm{i}\gamma z -\frac{\sigma^2z^2}{2}+\int
_{-1}^{1}(\mathrm{e}^{\mathrm{i}z x}-1-\mathrm{i}z
x)\Pi(\dd x),\qquad z\in\R.
\]
For later use we denote by $\Phi$ the Laplace exponent of a
subordinator $A$, $\E\mathrm{e}^{-u A_1}=\mathrm{e}^{-\Phi(u)}$,
\[
\Phi(u)= u \gamma_A + \int_0^\infty(1-\mathrm{e}^{-ux}) \Pi_A(\dd x).
\]

Further, we use the standard notation $\bar\Pi(\eps):=\Pi([-\eps
,\eps
]^c)$ for the two-sided tail of the L\'evy measure.

In the following, we denote by $f \sim g$ the strong asymptotic
equivalence, that is, $\lim f/g=1$, and by $f\approx g$ the weak
asymptotic equivalence, that is, $0<\liminf f/g \leq\limsup f/g <
\infty$.

%s2 #&#
%s2 ###
\section{Main results} \label{sec:results}

Our first theorem manages the transfer from small deviation rates to
LIL under minimal loss of constants.
%
%th1 #&#
%
\begin{theorem}\label{t3}
Let $X$ be a L\'evy process (without loss of generality assume that
$X$ has jumps smaller than $1$ in absolute value). Let $F$ be a
function increasing to infinity at zero, such that with some $0<\lambda
_1\leq\lambda_2<\infty$
\begin{equation}\label{eqn:sdestmain}
\lambda_1 F(\eps)t\leq-\log\P(\Vert X\Vert_{t}<\eps) \leq
\lambda_2 F(\eps)t
\qquad\mbox{for all $\eps<\eps_0$ and $t<t_0$.}
\end{equation}
Further, define
\[
b_\lambda(t):=F^{-1} \biggl(\frac{\log|\log t|}{\lambda t} \biggr)
\]
for $\lambda>0$, and assume that, as $n\to\infty$,
\begin{eqnarray} \label{eqn:conditionM}
&&(n+1)^{-(n+1)^\beta} \biggl| \int_{|x|>b_{\lambda_2'}(n^{-n^\beta
})} x
\Pi(\dd x) - \gamma\biggr|\nonumber\\ [-8pt]\\ [-8pt]
&&\quad=\mathrm{o} (b_{\lambda_2'} (n^{-n^\beta} ) )\qquad\mbox
{for all
$\beta>1$ and $\lambda_2'>\lambda_2$.}\nonumber
\end{eqnarray}
Then the LIL
\[
1\leq\liminf_{t\rightarrow0}\frac{\Vert X\Vert_{t}}{b_{\lambda_1'}(t)}
\quad\mbox{and}\quad\liminf_{t\rightarrow0}\frac{\Vert X\Vert
_{t}}{b_{\lambda
_2'}(t)}\leq1
\]
hold almost surely for any $\lambda_1'<\lambda_1$ and $\lambda
_2'>\lambda_2$.
\end{theorem}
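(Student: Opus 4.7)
The plan is to prove the two one-sided statements separately via Borel--Cantelli arguments tuned to the direction. For the lower bound $\liminf_{t\to 0}||X||_t/b_{\lambda_1'}(t)\geq 1$ I would pick a slowly decreasing geometric sequence $s_n:=\theta^n$ with $\theta\in(0,1)$ close to $1$ and consider $B_n:=\{||X||_{s_n}<(1-\delta)b_{\lambda_1'}(s_n)\}$. The upper half of (\ref{eqn:sdestmain}), together with the monotonicity of $F$ (decreasing, since it tends to $\infty$ at $0$) and the defining identity $F(b_{\lambda_1'}(s_n))\,s_n=\log|\log s_n|/\lambda_1'$, gives $\P(B_n)\leq |\log s_n|^{-\lambda_1/\lambda_1'}$, which is summable as $\lambda_1/\lambda_1'>1$. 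Standard Borel--Cantelli yields $||X||_{s_n}\geq(1-\delta)b_{\lambda_1'}(s_n)$ eventually; monotonicity of $t\mapsto ||X||_t$ and of $b_{\lambda_1'}$ allows interpolation to $t\in[s_{n+1},s_n]$, at the cost of the factor $b_{\lambda_1'}(s_{n+1})/b_{\lambda_1'}(s_n)\to 1$ as $\theta\uparrow 1$ (by continuity of $F^{-1}$). Letting first $\theta\uparrow 1$ and then $\delta\downarrow 0$ delivers the claim.

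For the upper bound $\liminf_{t\to 0}||X||_t/b_{\lambda_2'}(t)\leq 1$, independence is essential, so I choose the super-polynomial sequence $t_n:=n^{-n^\beta}$ with $\beta\in(1,\lambda_2'/\lambda_2)$ and decompose the path on the disjoint intervals $[t_{n+1},t_n]$ via $X^{(n)}_s:=X_{t_{n+1}+s}-X_{t_{n+1}}$ for $s\in[0,t_n-t_{n+1}]$. These fragments are mutually independent by independent increments and equidistributed with $X$ on $[0,t_n-t_{n+1}]$; for
\[
C_n:=\bigl\{||X^{(n)}||_{t_n-t_{n+1}}<b_{\lambda_2'}(t_n)\bigr\},
\]
the lower half of (\ref{eqn:sdestmain}) combined with $t_n-t_{n+1}\sim t_n$ yields
\[
\P(C_n)\geq \exp\bigl(-\lambda_2 F(b_{\lambda_2'}(t_n))(t_n-t_{n+1})\bigr)\gtrsim n^{-\beta\lambda_2/\lambda_2'},
\]
so $\sum\P(C_n)=\infty$ by the choice of $\beta$, and the independent Borel--Cantelli lemma gives that $C_n$ occurs infinitely often almost surely.

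To convert $C_n$ into the bound $||X||_{t_n}\leq(1+o(1))b_{\lambda_2'}(t_n)$ I use the triangle-type estimate $||X||_{t_n}\leq ||X||_{t_{n+1}}+||X^{(n)}||_{t_n-t_{n+1}}$; it then suffices to show $||X||_{t_{n+1}}=o(b_{\lambda_2'}(t_n))$ almost surely. This is the step where assumption (\ref{eqn:conditionM}) is used: truncating the jumps of $X$ at level $\eps_n:=b_{\lambda_2'}(t_n)$ yields the L\'evy--It\^o decomposition
\[
X_t=c_n t+\sigma W_t+J^{(n)}_t+M^{(n)}_t,\qquad c_n:=\gamma-\int_{|x|>\eps_n}x\,\Pi(\d x),
\]
with $J^{(n)}$ the compound Poisson of jumps of modulus $>\eps_n$ and $M^{(n)}$ the square-integrable martingale of compensated small jumps. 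Over $[0,t_{n+1}]$ the drift $|c_n|\,t_{n+1}=o(\eps_n)$ is exactly what (\ref{eqn:conditionM}) provides; Doob's $L^2$ inequality applied to $\sigma W+M^{(n)}$ gives summable tail probabilities for deviations of order $\delta\eps_n$; and the probability of a large-jump event, $\leq t_{n+1}\bar\Pi(\eps_n)$, is summable because $\bar\Pi(\eps_n)$ is controlled through (\ref{eqn:sdestmain}) while $t_{n+1}/t_n\to 0$ super-polynomially. Borel--Cantelli on the three contributions gives $||X||_{t_{n+1}}=o(\eps_n)$ almost surely. The main obstacle of the proof is precisely this last step: without hypothesis (\ref{eqn:conditionM}), the drift induced by compensating jumps above $\eps_n$ would overwhelm the increment bound produced by the small-deviation estimate.
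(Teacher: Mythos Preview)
Your overall strategy matches the paper's: a geometric subsequence with first Borel--Cantelli for the lower bound, and the super-polynomial sequence $t_n=n^{-n^\beta}$ with independent increments and second Borel--Cantelli for the upper bound, followed by a truncation argument (drift via~(\ref{eqn:conditionM}), variance via the small-deviation bound) to kill the initial segment $||X||_{t_{n+1}}$. Your use of Doob's $L^2$ maximal inequality for $\sigma W+M^{(n)}$ is in fact a cleaner way to reach $\sup$-control than the paper's route, which bounds only the endpoint $|X_{t_{n+1}}|$ via Chebyshev and then passes to the supremum.

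There is one genuine soft spot in your lower-bound interpolation. You claim $b_{\lambda_1'}(s_{n+1})/b_{\lambda_1'}(s_n)\to 1$ as $\theta\uparrow 1$ ``by continuity of $F^{-1}$'', but this is a statement uniform in $n$ (you need $\liminf_n$ of that ratio to tend to $1$), and the arguments of $F^{-1}$ at $s_{n+1}$ and $s_n$ differ asymptotically by the fixed factor $1/\theta$; with no regularity assumed on $F$ beyond monotonicity, $F^{-1}(y/\theta)/F^{-1}(y)$ need not approach $1$ uniformly as $y\to\infty$. The paper avoids this entirely by a one-line device: instead of inserting a $(1-\delta)$ factor, it compares $||X||_{r^{n+1}}$ with $b_{\lambda'}(r^n)$ (index offset by one). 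Choosing $r\in(\lambda_1'/\lambda_1,1)$ makes $-\log\P(||X||_{r^{n+1}}\le b_{\lambda_1'}(r^n))\ge (\lambda_1 r/\lambda_1')\log|\log r^n|$ summable, and then for $t\in[r^{n+1},r^n)$ one has directly $||X||_t/b_{\lambda_1'}(t)\ge ||X||_{r^{n+1}}/b_{\lambda_1'}(r^n)\ge 1$ with no ratio to control. Swapping your $(1-\delta)$ step for this offset removes the gap without changing anything else in your argument.
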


%re2 #&#
%
\begin{rem}\label{RemMladen1}
It is important to note the role of \eqref{eqn:conditionM}. It
ensures that the process does not become too asymmetric when one
continues to cut off more and more smaller jumps. Only in this case is
it possible to expect an estimate of type \eqref{eqn:sdestmain} to
follow from the framework given in Aurzada and Dereich \cite{AD09}.
Corollary~\ref{cort2} below and, in particular,
\eqref{eqn:cond-esschervanishes} give a sufficient condition when this
is the case.
\end{rem}

%
% \begin{rem}\label{RemMladen2}
% It may be instructive to study \eqref{eqn:conditionM} in the case of
%a particular example. Namely, one can easily check that
%we can truncate the jumps bigger than $1$, a simple computation using
%the fact that $\Pi(\dd x)=cx^{-\alpha-1}\dd x$ and Theorem~11 in
%Bertoin
% \[b(t)=t | \int_{1>x>t} x \Pi(\dd x) |,\]
% which is almost the left-hand side of \eqref{eqn:conditionM}. This
%shows that \eqref{eqn:conditionM} is not of technical matter, but the
%quantities arising there can acutally govern the norming function in
%the LIL, in which case Theorem~\ref{t3} cannot be applied.
% \end{rem}
%

%re3 #&#
%
\begin{rem}\label{RemMladen3}
Let us relate our condition \eqref{eqn:conditionM} with the
condition of Wee \cite{W88}. Note that \eqref{eqn:conditionM} is
analytic, that is, in terms of the L\'evy triplet, whereas Wee's
condition \eqref{eqn:weecondition} is probabilistic. It seems that
\eqref{eqn:weecondition} cannot always be checked from the L\'evy
triplet. To understand the difficulty, it may be instructive to look at
Theorems~4 and~5 in Andrew \cite{A08}, which reformulate \eqref
{eqn:weecondition} in terms of other probabilistic quantities. % A
%necessary and sufficient analytic condition for the sequential limit
%in probability exists only for a sequence of first exits out of a
%symmetric intervals.
\end{rem}

%Unfortunately, our proof forces us to assume condition (
%This condition is clearly satisfied for symmetric processes and can be
%checked readily from the L\'evy triplet.

It is crucial that there is almost no loss of constants in the
transfer from the small deviations to the LIL as in cases when
$b_{\lambda}$ is not regularly varying, the constants $\lambda_1',
\lambda_2'$ may influence the rate function drastically; see (\ref
{eqn:constinexp}) for an extreme example.

If instead $b_{\lambda}$ only depends on $\lambda$ via a
multiplicative constant, our approach allows to strengthen the previous
theorem to the optimal limiting constants. Such examples occur, for
instance, if the small deviation rate function $F$ is regularly varying.

\begin{corollary}\label{cor:t1corollary}
In the setting of Theorem~\ref{t3}, assume additionally that $F$ is
regularly varying at zero with non-positive exponent. Then the
following LIL hold almost surely:
\begin{equation} \label{eqn:lilnonre}
1\leq\liminf_{t\rightarrow0}\frac{\Vert X\Vert_{t}}{b_{\lambda_1}(t)}
\quad\mbox{and}\quad\liminf_{t\rightarrow0}\frac{\Vert X\Vert
_{t}}{b_{\lambda
_2}(t)}\leq1.
\end{equation}
In particular, if there is $\lambda>0$ such that (\ref{eqn:sdestmain})
holds for all $\lambda_1<\lambda$ and all $\lambda_2>\lambda$, then
\[
\liminf_{t\rightarrow0}\frac{\Vert X\Vert_{t}}{b_{\lambda
}(t)}=1\qquad\mbox{a.s.}
\]
\end{corollary}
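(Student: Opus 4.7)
The strategy is to remove the slack $\lambda_1'<\lambda_1$, $\lambda_2'>\lambda_2$ in Theorem~\ref{t3} by exploiting regular variation of $F$. Write the exponent of regular variation as $-\rho$ with $\rho\geq 0$. By standard inversion results of regular variation theory, for $\rho>0$ the asymptotic inverse $F^{-1}$ is regularly varying at infinity with exponent $-1/\rho$; hence for any $\lambda,\mu>0$, setting $x=\log|\log t|/(\lambda t)\to\infty$ as $t\to 0$,
\begin{align*}
	\lim_{t\to 0}\frac{b_\mu(t)}{b_\lambda(t)}=\lim_{x\to\infty}\frac{F^{-1}((\lambda/\mu)x)}{F^{-1}(x)}=\left(\frac{\mu}{\lambda}\right)^{1/\rho},
\end{align*}
which tends to $1$ as $\mu\to\lambda$.

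Given this limit, the lower bound of Theorem~\ref{t3} yields, for every $\lambda_1'<\lambda_1$,
\begin{align*}
	\liminf_{t\to 0}\frac{||X||_{t}}{b_{\lambda_1}(t)}
	=\liminf_{t\to 0}\frac{||X||_{t}}{b_{\lambda_1'}(t)}\cdot \lim_{t\to 0}\frac{b_{\lambda_1'}(t)}{b_{\lambda_1}(t)}
	\geq \left(\frac{\lambda_1'}{\lambda_1}\right)^{1/\rho},
\end{align*}
using that the limit of a strictly positive factor pulls out of $\liminf$. Letting $\lambda_1'\uparrow \lambda_1$ gives $\liminf\geq 1$. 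The symmetric manipulation, applied to the upper bound of Theorem~\ref{t3} with $\lambda_2'\downarrow\lambda_2$, gives $\liminf_{t\to 0}||X||_{t}/b_{\lambda_2}(t)\leq 1$; these two bounds are exactly (\ref{eqn:lilnonre}).

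For the final assertion, suppose (\ref{eqn:sdestmain}) holds for every pair $\lambda_1<\lambda<\lambda_2$. Applying (\ref{eqn:lilnonre}) to each such pair gives $\liminf ||X||_{t}/b_{\lambda_1}(t)\geq 1$ for all $\lambda_1<\lambda$ and $\liminf ||X||_{t}/b_{\lambda_2}(t)\leq 1$ for all $\lambda_2>\lambda$. One more iteration of the ratio computation of the previous paragraph, now with $\lambda_1\uparrow\lambda$ and $\lambda_2\downarrow \lambda$, pinches $\liminf ||X||_{t}/b_\lambda(t)$ between bounds that both tend to $1$, yielding equality.

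The only delicate point is the identification of $\lim b_\mu/b_\lambda$; this is routine from regular variation theory when $\rho>0$. The borderline case $\rho=0$ (slowly varying $F$) is genuinely problematic, since $F^{-1}$ may be rapidly varying and the ratios $b_\mu/b_\lambda$ need not tend to a finite positive limit; a separate hypothesis (for instance, a quantitative second-order estimate on $F$) would be required to handle that case, but this does not arise in the intended examples where $\rho>0$.
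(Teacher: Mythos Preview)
Your proof is correct and follows essentially the same route as the paper's: both arguments use that regular variation of $F$ (with exponent $-\alpha<0$) makes $F^{-1}$ regularly varying at infinity, so $b_{\lambda'}(t)/b_{\lambda}(t)\to(\lambda'/\lambda)^{1/\alpha}$, and then let $\lambda_1'\uparrow\lambda_1$, $\lambda_2'\downarrow\lambda_2$ to remove the slack from Theorem~\ref{t3}. Your caveat about the borderline $\rho=0$ is well taken---the paper's own proof likewise only carries out the computation for strictly negative exponent, so the slowly varying case is not actually covered by either argument despite the ``non-positive'' wording in the statement.
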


In the setting of a regularly varying rate function, say $F$ is
regularly varying at zero with exponent $-\alpha$, $\alpha>0$, one can
express (\ref{eqn:lilnonre}) as
\[
\liminf_{t\rightarrow0}\frac{\Vert X\Vert_{t}}{b_{1}(t)}\in
[\lambda
_1^{1/\alpha},\lambda_2^{1/\alpha} ],\qquad\mbox{a.s.}
\]
This shows that only the quality of the small deviation estimate (\ref
{eqn:sdestmain}) matters in order to obtain the limiting constant in
the LIL. Recall that the Blumenthal zero--one law implies that the limit
is almost surely equal to a deterministic constant, which in this case
can be specified.
%
% Typically, one can only hope to obtain an LIL in the sense of (
%stochastic process are known in the strong asymptotic sense (i.e.\ $
%cases, one can only expect for LIL of the type
% \begin{align*}
% 1\leq\liminf_{t\to0}\frac{\Vert X\Vert_t}{b^1(t)} \mbox{and}
% \end{align*}
% for norming functions $b^1\leq b^2$ depending on the constants
%obtained in the asymptotics of (\ref{small}).

Theorem~\ref{t3} reduces the question of the right norming function
for the LIL to the question of small deviations which is known
precisely for many examples.
For general L\'evy processes, those have been obtained in Aurzada and Dereich
\cite{AD09} (their results were stated for $t=1$ only, but hold, in
general, as we discuss in Proposition~\ref{prop:ad} below). In
particular, for symmetric L\'evy processes, their main result states
that the rate function is given by
\begin{equation}\label{a2}
F(\eps)=\eps^{-2}U(\eps),
\end{equation}
where $U(\eps)$ is the variance of $X$ with jumps larger than $\eps$
replaced by jumps of size $\eps$,
\begin{equation}\label{eqn:defnU}
U(\eps):=\eps^2\bar\Pi(\eps)+\sigma^2+\int_{-\eps}^{\eps
}x^2\Pi(\dd x).
\end{equation}
From these specific small deviations we can deduce the following
corollary for symmetric processes.

\begin{corollary} \label{cor:sddirectsymmetric}
Let $X$ be a symmetric L\'evy process; then there are $0<\lambda_1\leq
\lambda_2<\infty$ such that,
almost surely,
\[
1\leq\liminf_{t\rightarrow0}\frac{\Vert X\Vert_{t}}{b_{\lambda_1}(t)}
\quad\mbox{and}\quad\liminf_{t\rightarrow0}\frac{\Vert X\Vert
_{t}}{b_{\lambda
_2}(t)}\leq1,
\]
with
\[
b_\lambda(t):=F^{-1} \biggl(\frac{\log|\log t|}{\lambda t} \biggr)
\]
and $F$ defined in (\ref{a2}). If, additionally, $F$ is regularly
varying at zero with exponent $-\alpha$, $\alpha>0$, then the following
general bounds hold:
\[
\frac{1}{12}\frac{1}{2^{\alpha}}\leq\lambda_1\leq\lambda_2\leq
3^{\alpha}10.
\]
\end{corollary}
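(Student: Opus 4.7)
The strategy is to combine Theorem~\ref{t3} (and its refinement Corollary~\ref{cor:t1corollary}) with the symmetric small deviation rate (\ref{a2}) of \cite{AD09}, used in its time-dependent form provided by Proposition~\ref{prop:ad}.

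First I would invoke Proposition~\ref{prop:ad} in the symmetric setting to extract constants $0<\lambda_1\le\lambda_2<\infty$ such that the two-sided bound (\ref{eqn:sdestmain}) holds with $F(\eps)=\eps^{-2}U(\eps)$. Next I verify hypothesis (\ref{eqn:conditionM}). For a symmetric L\'evy process the L\'evy measure is symmetric, so $\int_{|x|>a}x\,\Pi(\d x)=0$ for every $a>0$; moreover the truncated canonical form given in the introduction forces the drift $\gamma$ to vanish, since otherwise the characteristic exponent $\psi$ would have a nonzero imaginary part. Hence the quantity inside the absolute value in (\ref{eqn:conditionM}) is identically zero and the hypothesis holds trivially. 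An application of Theorem~\ref{t3} then delivers the first, qualitative, part of the corollary.

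For the quantitative statement under regular variation, I would track the numerical constants present in Proposition~\ref{prop:ad}. The symmetric small deviation estimate of \cite{AD09} has the shape
\[
\tfrac{1}{12}\,F(2\eps)\,t\;\le\;-\log\P(||X||_t<\eps)\;\le\;10\,F(\eps/3)\,t
\]
for $\eps$ and $t$ sufficiently small. When $F$ is regularly varying at zero with index $-\alpha<0$ one has $F(c\eps)\sim c^{-\alpha}F(\eps)$ as $\eps\to 0$, so the above display implies that (\ref{eqn:sdestmain}) holds with any $\lambda_1<\tfrac{1}{12\cdot 2^{\alpha}}$ and any $\lambda_2>10\cdot 3^{\alpha}$. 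Since $F$ has non-positive regular variation index, Corollary~\ref{cor:t1corollary} is applicable and yields the LIL with precisely these admissible ranges, giving the asserted bounds on $\lambda_1$ and $\lambda_2$.

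The main technical obstacle lies in the last paragraph: one must pin down the exact numerical constants $1/12$ and $10$ coming out of the symmetric small deviation estimate of \cite{AD09} and propagate the shifted arguments $2\eps$ and $\eps/3$ through the regular variation asymptotics so that exactly the factors $2^{\alpha}$ and $3^{\alpha}$ appear. Everything else reduces to a direct verification of the hypotheses of Theorem~\ref{t3} and Corollary~\ref{cor:t1corollary}, where symmetry of $X$ makes the drift condition (\ref{eqn:conditionM}) essentially vacuous.
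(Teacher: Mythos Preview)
Your proposal is correct and follows essentially the same route as the paper's proof, which simply invokes Theorem~\ref{t3} (with condition~(\ref{eqn:conditionM}) vacuous by symmetry) and then reads off the explicit constants from Proposition~\ref{prop:ad}, using regular variation to convert $F(2\eps)$ and $F(\eps/3)$ into multiples of $F(\eps)$. The only detail you omit is the additive constants $-1$ and $+3$ in (\ref{eqn:sdquantityadstrengthend}), but these are harmless because along the Borel--Cantelli sequences used in Lemmas~\ref{lem:lower} and~\ref{L2} one has $tF(\eps)\to\infty$.
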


The loss of constants in the corollary is only due to the general
formulation. For some examples we will see below that the small
deviations are known in the strong asymptotic sense so that
Theorem~\ref
{t3} gives the precise law.

In the sequel we call ``strongly non-symmetric'' L\'evy processes the
processes for which (\ref{eqn:conditionM}) does not hold. Their study
requires different assumptions on $b_{\lambda}$; see \eqref
{eqn:regularityofb}. For this case, we provide a different link between
small deviation rates and LIL. The next result does not require \eqref
{eqn:conditionM} and thus allows us to study the ``strongly
non-symmetric'' L\'evy processes as well as other cases when \eqref
{eqn:conditionM} is difficult to verify. The latter is substituted by
the seemingly easier \eqref{eqn:regularityofb} at the expense of the
strength of the result; that is, we manage to keep track of the
constants in the norming function in an optimal way, but lose the
limiting constant. We have tried unsuccessfully to find a suitable
relation between \eqref{eqn:conditionM} and \eqref{eqn:regularityofb}.
We strongly suspect that neither one follows from the other.

%
%th4 #&#
%
\begin{theorem}\label{t}
Let $X$ be a L\'evy process with jumps smaller than $1$ in absolute
value, and let $F$ be a function increasing to infinity at zero such
that for $0<\lambda_1\leq\lambda_2<\infty$
\begin{equation} \label{eqn:yetanothersdestimate}
\lambda_1 F(\eps)t\leq-\log\P(\Vert X\Vert_{t}<\eps) \leq
\lambda_2 F(\eps)t\qquad
\mbox{for all $\eps<\eps_0$ and $t<t_0$.}
\end{equation}
Furthermore, set
\begin{equation} \label{eqn:quantitythebee}
b_\lambda(t):=F^{-1} \biggl(\frac{\log|\log t|}{\lambda t} \biggr),
\end{equation}
and suppose that there is a constant $C>0$ such that
\begin{equation} \label{eqn:regularityofb}
C b_{\lambda}(t)\leq b_{\lambda}(t/2),\qquad0<t\leq t_0, \lambda\in
(\lambda_1/2,2\lambda_2).
\end{equation}
Then the LIL
\[
0<\liminf_{t\rightarrow0}\frac{\Vert X\Vert_{t}}{b_{\lambda_1'}(t)}
\quad\mbox{and}\quad\liminf_{t\rightarrow0}\frac{\Vert X\Vert
_{t}}{b_{\lambda
_2'}(t)}<\infty
\]
hold almost surely for all $\lambda_1'<\lambda_1$ and $\lambda
_2<\lambda_2'$.
\end{theorem}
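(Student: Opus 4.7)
I plan to derive both inequalities from Borel-Cantelli arguments along the dyadic sequence $t_n:=2^{-n}$, combined with the regularity~(\ref{eqn:regularityofb}) to transfer the conclusion back to arbitrary $t\to 0$. Indeed, for $t\in(t_{n+1},t_n]$, monotonicity of $\|X\|_t$ and $b_\lambda(t_{n+1})\ge C\,b_\lambda(t_n)\ge C\,b_\lambda(t)$ imply that the continuous liminf differs from the discrete one only by a factor $C>0$.

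The lower bound follows from the first Borel-Cantelli lemma. The upper estimate in~(\ref{eqn:yetanothersdestimate}) applied at $\eps=b_{\lambda_1'}(t_n)$, together with $F(b_{\lambda_1'}(t_n))\,t_n=\log\lvert\log t_n\rvert/\lambda_1'$ by definition of $b_{\lambda_1'}$, yields
\[
\P\bigl(\|X\|_{t_n}\le b_{\lambda_1'}(t_n)\bigr)\le\exp\!\left(-\tfrac{\lambda_1}{\lambda_1'}\log\lvert\log t_n\rvert\right)\asymp n^{-\lambda_1/\lambda_1'}.
\]
Since $\lambda_1/\lambda_1'>1$, this series converges, so almost surely $\|X\|_{t_n}>b_{\lambda_1'}(t_n)$ eventually, which gives the first half of the theorem after the discrete-to-continuous step.

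For the upper bound I would pass to independent increments $Z_n:=\sup_{s\in[t_{n+1},t_n]}\lvert X_s-X_{t_{n+1}}\rvert$, which are independent and $Z_n\stackrel{d}{=}\|X\|_{t_{n+1}}$. Iterating the triangle inequality $\|X\|_{t_n}\le\|X\|_{t_{n+1}}+Z_n$ and invoking $\|X\|_{t_{n+k}}\to 0$ a.s.\ gives the pathwise telescoping bound $\|X\|_{t_n}\le\sum_{j\ge n}Z_j$. The plan is to choose thresholds $\eta_j>0$ so that the independent events $A_j:=\{Z_j\le\eta_j\}$ satisfy simultaneously (a)~$\sum_{j}\P(A_j^c)<\infty$, and (b)~$\sum_{j\ge n}\eta_j\le K\,b_{\lambda_2'}(t_n)$ for some $K<\infty$ and all large $n$. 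Then~(a), combined with the second Borel-Cantelli lemma applied to the complements and Blumenthal's $0$-$1$ law, ensures that $\bigcap_{j\ge N}A_j$ holds almost surely for some random $N$; (b) and the telescoping bound then force $\|X\|_{t_n}\le K\,b_{\lambda_2'}(t_n)$ for every $n\ge N$, as required.

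The main obstacle will be to realize~(a) and~(b) simultaneously, since a smaller $\eta_j$ tightens~(b) but relaxes~(a). My proposal is to set $\eta_j=K_j\,b_\mu(t_j)$ with $\mu\in(\lambda_2,2\lambda_2)$ slightly exceeding $\lambda_2$ and a slowly decaying sequence $K_j$. For~(a), the lower half of~(\ref{eqn:yetanothersdestimate}) pushes $\P(A_j)$ close to one, while the boundedness of the jumps of $X$ supplies the concentration (finite moments of all orders for $Z_j$, together with Doob's maximal inequality) needed to bound $\P(A_j^c)$; for~(b), the regularity~(\ref{eqn:regularityofb}) on the whole range $\lambda\in(\lambda_1/2,2\lambda_2)$ is used twice, both to compare $b_\mu$ with $b_{\lambda_2'}$ and to dominate the tail sums $\sum_{j\ge n}b_\mu(t_j)$ by a multiple of $b_\mu(t_n)$. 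The slack $\lambda_2'-\lambda_2>0$ is precisely what is consumed in this balancing act, explaining why the method yields only $\liminf<\infty$ and not a sharp limiting constant.
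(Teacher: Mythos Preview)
Your lower bound is fine and essentially reproduces the paper's Lemma~\ref{lem:lower}: a first Borel--Cantelli argument along a geometric sequence, with the regularity~(\ref{eqn:regularityofb}) filling in the intermediate times at the cost of the constant~$C$. (Minor point: you wrote ``upper estimate in~(\ref{eqn:yetanothersdestimate})'' but actually used the lower one, $\lambda_1 F(\eps)t\le -\log\P$.)

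The upper bound, however, has a genuine gap that cannot be repaired within your framework. Observe what your scheme would prove if it worked: from (a) you get $A_j$ eventually, and then the telescoping bound together with~(b) gives $\|X\|_{t_n}\le K\,b_{\lambda_2'}(t_n)$ for \emph{every} large $n$. That is $\limsup_{t\to 0}\|X\|_t/b_{\lambda_2'}(t)<\infty$, which is strictly stronger than the required $\liminf<\infty$ and is typically \emph{false} in the Chung--type setting (the correct $\limsup$ normalization is of larger order than $b_\lambda$). So the argument must break, and it breaks at~(a): since $\eta_j\asymp b_\mu(t_j)$ is the small--deviation scale, the events $A_j=\{Z_j\le\eta_j\}$ are \emph{rare}, not typical. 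Indeed, $Z_j\stackrel{d}{=}\|X\|_{t_{j+1}}$ and $F(b_\mu(t_j))t_{j+1}=\tfrac{1}{2\mu}\log|\log t_j|$, so~(\ref{eqn:yetanothersdestimate}) gives
\[
\P(A_j)\le \exp\!\Big(-\tfrac{\lambda_1}{2\mu}\log|\log t_j|\Big)\asymp j^{-\lambda_1/(2\mu)}\longrightarrow 0,
\]
hence $\P(A_j^c)\to 1$ and $\sum_j\P(A_j^c)=\infty$. No concentration input (moments, Doob) can rescue this, because the threshold $\eta_j$ sits \emph{below} the typical size of $Z_j$; the small--deviation estimate is telling you precisely that.

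What is actually needed is the opposite Borel--Cantelli direction: one must show that the rare events $\{\|X\|_{t}\le b_{\lambda_2'}(t)\}$ occur \emph{infinitely often} along some sequence. The paper does this via a Kesten--type contradiction argument: assuming $\liminf_{t\to 0}\|X\|_t/b_{\lambda_2'}(t)>2/C$, one partitions according to the first index $n$ at which $\|X\|_{r^n}\le b_{\lambda_2'}(r^n)$, uses independence of increments and the regularity~(\ref{eqn:regularityofb}) to factor out a probability bounded below by $1/2$, and arrives at
\[
1\ge \tfrac12\sum_{n\ge l}\P\big(\|X\|_{r^n}\le b_{\lambda_2'}(r^n)\big),
\]
which contradicts the divergence of the right--hand side coming from the upper bound in~(\ref{eqn:yetanothersdestimate}). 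The key conceptual difference from your plan is that one never tries to control $\|X\|_t$ uniformly in $t$; one only needs a subsequence along which it is small, and the Kesten trick extracts independence without requiring an additional hypothesis like~(\ref{eqn:conditionM}).
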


Again, if the rate function $F$ is regularly varying, then we can
strengthen the result. Recall that the L\'evy processes that appear in
the formulation of the next sequence of results have jumps smaller than
$1$ in absolute value.

\begin{corollary}\label{cor:t1corollary2}
In the setting of Theorem~\ref{t}, assume additionally that $F$ is
regularly varying at zero with negative exponent. Then the following
LIL holds almost surely:
\[
\liminf_{t\rightarrow0}\frac{\Vert X\Vert_{t}}{b_{1}(t)} \in
(0,\infty).
\]
\end{corollary}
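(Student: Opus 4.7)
The plan is to derive Corollary~\ref{cor:t1corollary2} directly from Theorem~\ref{t} by a rescaling argument that exploits the regular variation of $F$. From Theorem~\ref{t}, we already have, for every $\lambda_1' < \lambda_1$ and $\lambda_2' > \lambda_2$,
\begin{align*}
  0 < \liminf_{t \to 0} \frac{||X||_t}{b_{\lambda_1'}(t)} \quad\text{and}\quad \liminf_{t\to 0} \frac{||X||_t}{b_{\lambda_2'}(t)} < \infty \quad\text{a.s.}
\end{align*}
The only task left is to transfer both statements to the single scale $b_1(t)$.

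The key step is a regular-variation computation. Since $F$ is regularly varying at zero with some exponent $-\alpha$, $\alpha>0$, its generalized inverse $F^{-1}$ is regularly varying at infinity with exponent $-1/\alpha$. As $t\to 0$ the argument $\log|\log t|/t$ tends to infinity, so for any fixed $\lambda>0$,
\begin{align*}
  \frac{b_\lambda(t)}{b_1(t)} = \frac{F^{-1}\!\big(\log|\log t|/(\lambda t)\big)}{F^{-1}\!\big(\log|\log t|/t\big)} \longrightarrow \lambda^{1/\alpha}, \qquad t \to 0.
\end{align*}
In particular $b_\lambda(t)/b_1(t)$ converges to a finite positive limit, which gives
\begin{align*}
  \liminf_{t\to 0}\frac{||X||_t}{b_1(t)} \;=\; \lambda^{1/\alpha} \; \liminf_{t\to 0}\frac{||X||_t}{b_\lambda(t)}
\end{align*}
for every $\lambda>0$.

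Applying this identity with $\lambda=\lambda_1'$ gives $\liminf_{t\to 0} ||X||_t/b_1(t) > 0$, and with $\lambda=\lambda_2'$ gives $\liminf_{t\to 0} ||X||_t/b_1(t) < \infty$. The Blumenthal zero-one law then guarantees that this liminf equals a deterministic constant almost surely, so it lies in $(0,\infty)$ as claimed. I do not anticipate a significant obstacle; the only care needed is the passage to the limit in the ratio $b_\lambda(t)/b_1(t)$, which is immediate from the uniform convergence inherent in Karamata's characterization (or Potter's bounds) for regularly varying functions with nonzero index.
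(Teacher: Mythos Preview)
Your proof is correct and follows essentially the same route as the paper, which simply states that the argument is ``completely analogous'' to the proof of Corollary~\ref{cor:t1corollary}: use the regular variation of $F$ to get $b_\lambda(t)\sim\lambda^{1/\alpha}b_1(t)$ and then transfer the bounds from Theorem~\ref{t} to the single scale $b_1$.
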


The theorems listed so far manage the transfer between small deviation
order and LIL. Similarly to Corollary~\ref{cor:sddirectsymmetric}, we
can combine them with the main results of Aurzada and Dereich \cite
{AD09}. This looks more technical in the present case. We give an
explanation of the role of the different terms after stating the result.

\begin{corollary}\label{cort2}
Let $X$ be a L\'evy process with triplet $(\gamma, \sigma^2, \Pi)$.
Assume that $u_\eps$ is the solution of the equation $\Lambda_\eps
'(u)=0$, where $\Lambda_\eps$ is the following log Laplace transform:
\begin{equation} \label{eqn:quantitylambda}
\Lambda_\eps(u)=\frac{\sigma^2}{2} u^2 + \biggl(\gamma-\int
_{[-1,1]\setminus[-\eps,\eps]} x \Pi(\dd x) \biggr) u + \int
_{-\eps}^{\eps}
(\mathrm{e}^{u x}-1 - u x) \Pi(\dd x).
\end{equation}
Set
\begin{equation} \label{eqn:quantitythef}
F(\eps):=\eps^{-2} U_\eps(\eps)-\Lambda_\eps(u_\eps),\qquad
U_\eps(\eps
):=\eps^2\bar\Pi(\eps)+\sigma^2+\int_{-\eps}^{\eps}x^2 \mathrm
{e}^{-u_\eps x}
\Pi(\dd x),
\end{equation}
and assume $F$ is increasing to infinity as $\eps\to0$. Define $b$ as
in (\ref{eqn:quantitythebee}), and assume that $b$ satisfies (\ref
{eqn:regularityofb}).
% b_\lambda(t):=F^{-1} (\frac{ \log|\log t|}{\lambda t} )
%and assume that for any $\lambda>0$ there is a constant $C_\lambda>0$
%such that
% C_\lambda b_{\lambda}(t)\leq b_{\lambda}(t/2), t\geq0.
If, furthermore,
\begin{equation} \label{eqn:cond-esschervanishes}
\eps|u_\eps| = \mathrm{o}( \log\log F(\eps) ),\qquad\mbox{as
$\eps\to0$,}
\end{equation}
is satisfied, then we have, for some $\lambda_1, \lambda_2>0$,
\[
0< \liminf_{t\rightarrow0}\frac{\Vert X\Vert_{t}}{b_{\lambda_1}(t)}
\quad\mbox{and}\quad\liminf_{t\rightarrow0}\frac{\Vert X\Vert
_{t}}{b_{\lambda
_2}(t)}<\infty\qquad\mbox{a.s.}
\]
\end{corollary}

Let us explain the quantities appearing in Corollary~\ref{cort2} in
more detail. The main observation is that the proof for the small
deviation estimates in Aurzada and Dereich \cite{AD09} (Theorem~1.5)
can be used directly for any $t>0$ to obtain the following proposition.

\begin{proposition} \label{prop:ad}
Let $\Lambda_\eps$ be as defined in (\ref{eqn:quantitylambda}) and
assume that $u_\eps$ is the solution of $\Lambda_\eps'(u_\eps)=0$.
Then, with $F$ as in (\ref{eqn:quantitythef}), we have, for all $t>0$
and all $\eps<1$,
\begin{equation} \label{eqn:sdquantityadstrengthend}
\frac{1}{12} t F(2\eps) - \eps|u_{2\eps}| -1 \leq- \log\P( \Vert
X\Vert_t
\leq\eps) \leq10 t F \biggl(\frac{\eps}{3} \biggr) + \eps
|u_{\eps/3}|+3.
\end{equation}
\end{proposition}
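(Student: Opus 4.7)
The plan is to reread the proof of \cite[Theorem~1.5]{AD09}, which establishes the special case $t=1$ of \eqref{eqn:sdquantityadstrengthend}, while retaining $t$ as a free parameter throughout. The reason this is viable is that $t$ enters the argument only through the log-Laplace transform of $X_t$ and through variances of $X_t$, both of which scale linearly in $t$; the rate $F$ and the Esscher parameter $u_\eps$ themselves do not depend on $t$. The organizing tool is the Esscher transform applied to the truncated process $\widetilde X^{(\eps)}$ obtained from $X$ by removing jumps of absolute value larger than $\eps$: its log-Laplace at time $t$ is $t\Lambda_\eps(u)$ up to a deterministic linear drift, and the choice $u=u_\eps$ is precisely the one that centres the tilted process.

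For the upper bound on $\P(\|X\|_t \le \eps)$ one starts from the observation that a single jump of absolute value strictly larger than $2\eps$ forces $\|X\|_t > \eps$, so that
\[
    \P(\|X\|_t \le \eps) \le \P\bigl(\|\widetilde X^{(2\eps)}\|_t \le \eps\bigr).
\]
Tilting by $u_{2\eps}$ and applying a second-moment (Paley--Zygmund) lower bound on $\|\widetilde X^{(2\eps)}\|_t$ in the tilted measure, the variance $t\,U_{2\eps}(2\eps)$ combines with the Laplace rate $t\Lambda_{2\eps}(u_{2\eps})$ to produce the factor $\tfrac{1}{12} t F(2\eps)$; since on $\{\|X\|_t \le \eps\}$ the density $e^{u_{2\eps} X_t}$ lies in $[e^{-\eps|u_{2\eps}|}, e^{\eps|u_{2\eps}|}]$, this accounts for the additive corrections $\eps |u_{2\eps}|$ and $-1$. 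For the lower bound on $\P(\|X\|_t \le \eps)$ the argument is dual: conditioning on the absence of jumps larger than $\eps/3$ on $[0,t]$ yields a factor $e^{-t\bar\Pi(\eps/3)}$; tilting the bounded-jump part by $u_{\eps/3}$ centres it, and Doob's $L^2$ maximal inequality on the centred martingale pins $\|\widetilde X^{(\eps/3)}\|_t$ below $\eps$ with the required probability, producing the factor $e^{-10\, t F(\eps/3)}$ and the additive $\eps|u_{\eps/3}|+3$ correction from the same exponential-density bound.

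The only genuine obstacle, and the main point requiring verification, is that no step of the argument in \cite{AD09} silently uses $t=1$. Since every appearance of $t$ is either as a linear coefficient of a cumulant $t\Lambda_\eps(u)$ or as a scaling factor in a variance $tU_\eps(\eps)$, and the constants $\tfrac{1}{12}$, $10$, $3$ together with the truncation levels $2\eps$ and $\eps/3$ are precisely those produced by the Paley--Zygmund and Doob constants in that proof, this verification is essentially mechanical: one rewrites each line of the $t=1$ argument with a symbolic $t$ in front of the appropriate quantity and reads off \eqref{eqn:sdquantityadstrengthend} in the stated form.
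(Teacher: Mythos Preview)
Your proposal is correct and matches the paper's own treatment: the paper does not give a standalone proof of this proposition but simply observes, immediately before stating it, that the proof of \cite[Theorem~1.5]{AD09} ``can be used directly for any $t>0$,'' which is exactly the rereading-with-$t$-free strategy you describe. Your additional commentary on the mechanics (truncation at $2\eps$ or $\eps/3$, Esscher tilt by $u_\eps$, and the moment/maximal inequalities producing the constants $\tfrac{1}{12}$, $10$, and the additive $\eps|u_\eps|$ corrections) is consistent with the explanation the paper gives after the proposition.
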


The term $\bar\Pi(2\eps)$ in (\ref{eqn:sdquantityadstrengthend})
(included in the $F$ term) comes from the requirement that there should
be no jumps larger than $2\eps$. After removing these jumps, the
process may drift out of the interval $[-\eps,\eps]$, which is
prevented by applying an Esscher transform to the process, whose
``price'' is given by the term $-\Lambda_\eps(u_\eps)$. The quantity
$u_\eps$ is the drift that has to be subtracted in order to make the
process a martingale. Then the remaining process is treated as in the
symmetric case, and the same term $\eps^{-2} U_\eps(\eps)$ appears as
in (\ref{a2}), but, this time, with respect to the L\'evy measure
transformed by the change of measure.

%The term $\eps^{-2} U_\eps(\eps)$ in the definition of $F$ is already
%known from the symmetric case in (\ref{a2}). It corresponds to the
%fluctuations of the process after the removal of the inherent drift.
%This is managed with the help of an Esscher transform, and the `price'
%for this transform is the second term, . The quantity $u_\eps$ is the
%drift that one has to subtract from the process (with its jumps larger
%than $\eps$ removed) in order make it a martingale.

Note that (\ref{eqn:sdquantityadstrengthend}) is almost the required
estimate in (\ref{eqn:yetanothersdestimate}), except for the term
$\eps
|u_\eps|$, which may spoil the estimate. It is exactly condition (\ref
{eqn:cond-esschervanishes}) that ensures that the term $\eps|u_\eps|$
can be neglected.

We stress that in some cases $\eps|u_\eps|$ does give an order that
is larger than $t F(\eps)$ so that the function $b$ from (\ref
{eqn:quantitythebee}) is not the right norming function. This effect
can be observed in some examples below. In particular, this happens for
processes of bounded variation with non-zero drift.

\begin{proposition} \label{prop:bvnodrift}
Let $X$ be a L\'evy process with bounded variation and non-vanishing
effective drift, that is, $\int_{[-1,1]} |x| \Pi(\dd x)< \infty$ and
$c:=\gamma-\int_{-1}^1 x \Pi(\dd x)\neq0$. Then
\[
\lim_{t\rightarrow0}\frac{\Vert X\Vert_t}{t} =|c|\qquad\mbox{a.s.}
\]
\end{proposition}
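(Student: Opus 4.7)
The plan is to decompose the bounded-variation L\'evy process $X$ pathwise into its linear drift and its pure jump part, and then reduce the claim to the classical small-time behaviour of a driftless subordinator.

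Since $X$ has bounded variation we have $\sigma=0$ and $\int_{[-1,1]} |x|\,\Pi(\d x)<\infty$, so the sum of jumps $J_t:=\sum_{0<s\le t}\Delta X_s$ converges absolutely and
\[
X_t = c\,t + J_t,\qquad t\ge 0,
\]
with $c=\gamma-\int_{-1}^1 x\,\Pi(\d x)$ the effective drift. The variation process $V_t:=\sum_{0<s\le t}|\Delta X_s|$ is itself a L\'evy process; more precisely, a subordinator whose L\'evy measure is the image of $\Pi$ under $x\mapsto |x|$ and whose drift coefficient vanishes. The key ingredient I would invoke is the classical fact that for any subordinator $V$ the ratio $V_t/t$ tends almost surely to its drift coefficient as $t\to 0$ (see, e.g., \cite[Chapter III]{B96}); in the present setting this gives $V_t/t\to 0$ almost surely.

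Granting this, the proposition follows from two elementary pathwise inequalities. For every $0\le s\le t$ we have $|X_s|\le |c|s+|J_s|\le |c|t+V_t$, hence $||X||_t\le |c|t+V_t$, whereas $||X||_t\ge |X_t|=|c\,t+J_t|\ge |c|t-V_t$. Dividing both bounds by $t$ and letting $t\to 0$ yields $||X||_t/t\to |c|$ almost surely. I expect the only non-routine step to be the subordinator statement $V_t/t\to 0$ a.s.; one can either cite it or reprove it directly from the Laplace exponent ($\Phi_V(u)/u\to 0$ at infinity iff the drift vanishes) combined with monotonicity of $V$ and a Borel--Cantelli argument along a geometric subsequence $t_n=2^{-n}$.
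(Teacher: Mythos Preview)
Your proof is correct and follows essentially the same approach as the paper: decompose $X$ into its drift and pure-jump part and sandwich $||X||_t/t$ using that a driftless subordinator satisfies $V_t/t\to 0$ almost surely. The only cosmetic difference is that the paper splits the jumps into the positive and negative parts $A^1,A^2$ (so your $V_t=A^1_t+A^2_t$) and cites the corresponding small-time limit for each, whereas you work directly with the total variation subordinator.
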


The proof of this proposition is based on classical arguments rather
than any connection to small deviations.

%Finally, we give a condition for (\ref{eqn:cond-esschervanishes})
%that can be checked easily and which will be satisfied in most cases
%of somehow regular L\'evy measure.
% If
% \begin{align} \label{eqn:easycdr}
% \eps|\int_{[-1,1]\setminus[-\eps,\eps]} x \Pi(\dd x) - \gamma
% | \preceq\int_{[0,\sign(u_\eps) \eps]} \Pi(\dd x),
% \end{align}
% then (\ref{eqn:cond-esschervanishes}) is satisfied.

%s3 #&#
%s3 ###
\section{Explicit LIL for L\'{e}vy processes}\label{sec:examples}
In this section, we collect concrete L\'evy processes for which we can
transform small deviation results to an LIL. As we have seen,
understanding the small deviation rates is crucial. \medskip

In this section we keep in mind that our processes in all proofs have
no jumps bigger than $1$ in absolute value. However, without loss of
generality, in some statements we use ``stable L\'evy processes'' and
others which presuppose unbounded jumps.

The first corollary gives us a useful variance domination principle
for LIL that works for many examples.

\begin{corollary}\label{cor:domination}
Suppose $X^1$ and $X^2$ are independent symmetric L\'evy processes,
then $X^1+X^2$ and $X^2$ fulfill precisely the same LIL if
\[
\lim_{\eps\to0}\frac{U_{X^1}(\eps)}{U_{X^2}(\eps)}=0.
\]
\end{corollary}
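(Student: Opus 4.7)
The plan is to reduce the statement to Theorem~\ref{t3} by showing that $X^2$ and $X^1+X^2$ have asymptotically equivalent small deviation rate functions. Since $X^1$ and $X^2$ are independent, $X^1+X^2$ is a L\'evy process whose characteristic triplet is obtained by adding those of $X^1$ and $X^2$ coordinatewise (Brownian variances add, L\'evy measures add, drifts add). Reading off the definition (\ref{eqn:defnU}) term by term immediately yields
\begin{equation*}
U_{X^1+X^2}(\eps) = U_{X^1}(\eps) + U_{X^2}(\eps).
\end{equation*}
The hypothesis $U_{X^1}(\eps)/U_{X^2}(\eps)\to 0$ therefore gives $U_{X^1+X^2}(\eps)\sim U_{X^2}(\eps)$ as $\eps\to 0$, hence also $F_{X^1+X^2}\sim F_{X^2}$ for the symmetric rate function $F(\eps)=\eps^{-2}U(\eps)$ from (\ref{a2}).

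Next I would invoke Proposition~\ref{prop:ad}, noting that in the symmetric setting $u_\eps\equiv 0$ and $\Lambda_\eps(u_\eps)=0$, so that both $X^2$ and $X^1+X^2$ satisfy small deviation bounds
\begin{equation*}
\tfrac{1}{12}\, t\, F(2\eps) - 1 \le -\log\P(\|X\|_t\le\eps) \le 10\, t\, F(\eps/3) + 3,
\end{equation*}
with $F=F_{X^2}$ and $F=F_{X^1+X^2}$ respectively. Using $F_{X^1+X^2}\sim F_{X^2}$, I may replace both rate functions by the common one $F_{X^2}$ at the cost of arbitrarily small perturbations of the multiplicative constants, so that hypothesis (\ref{eqn:sdestmain}) of Theorem~\ref{t3} holds simultaneously for both processes with the same function $F_{X^2}$ and the same pair $\lambda_1\le\lambda_2$. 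Condition (\ref{eqn:conditionM}) is automatic for symmetric L\'evy processes, since the drift $\gamma$ vanishes and symmetry of $\Pi$ makes $\int_{|x|>a} x\,\Pi(\d x)=0$; this applies to $X^2$ and, because symmetry is preserved under independent convolution, also to $X^1+X^2$.

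Theorem~\ref{t3} then delivers, for both $X^2$ and $X^1+X^2$, the very same LIL with the common norming function $b_\lambda(t)=F_{X^2}^{-1}(\log|\log t|/(\lambda t))$, and with the constants $\lambda_1',\lambda_2'$ arbitrarily close to the original $\lambda_1,\lambda_2$. The main obstacle is bookkeeping: one must verify that the asymptotic equivalence $F_{X^1+X^2}\sim F_{X^2}$ translates, through generalized inversion, to equivalence of the norming functions $b_\lambda$. This is handled by the slack in the constants $\lambda_1'<\lambda_1$, $\lambda_2'>\lambda_2$ allowed by Theorem~\ref{t3}: for any $\delta>0$ and $\eps$ small one has $(1-\delta)F_{X^2}(\eps)\le F_{X^1+X^2}(\eps)\le (1+\delta)F_{X^2}(\eps)$, and both processes thus inherit identical upper and lower LIL bounds relative to the single norming family $\{b_\lambda\}$ built from $F_{X^2}$.
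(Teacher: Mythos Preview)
Your proof is correct and follows essentially the same route as the paper. The paper's argument is a one-liner invoking Corollary~\ref{cor:sddirectsymmetric} together with the identity $U_{X^1+X^2}=U_{X^1}+U_{X^2}$; you have simply unwound that corollary back to its ingredients (Proposition~\ref{prop:ad} and Theorem~\ref{t3}) and made the constant-bookkeeping explicit, which is fine.
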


\begin{pf}
This follows directly from Corollary~\ref{cor:sddirectsymmetric}
noticing that $U_{X^1+X^2}=U_{X^1}+U_{X^2}$.
\end{pf}

In the same spirit, the following corollary (recovering (3.2) in
Buchmann and Maller \cite{BM09}) displays the intuitive fact that a
non-zero Brownian
component dominates the jumps of a L\'evy process.

\begin{corollary}\label{cor:withbrownian}
If $X$ is a L\'evy process with $\sigma\neq0$, then
\[
\liminf_{t\rightarrow0}\frac{\Vert X\Vert_t}{\sqrt{t/\log| \log
t|}}=\frac
{\uppi\sigma}{\sqrt{8}}\qquad\mbox{a.s.}
\]
\end{corollary}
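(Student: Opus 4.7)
I would aim to establish the sharp small-deviation asymptotic $-\log\P(||X||_t\le\eps)\sim\pi^2\sigma^2 t/(8\eps^2)$ in the LIL regime $t/\eps^2\to\infty$, and then apply Corollary~\ref{cor:t1corollary} with rate function $F(\eps)=\eps^{-2}$ (regularly varying at zero with exponent $-2$) and critical constant $\lambda=\pi^2\sigma^2/8$. Inverting $F$ yields the norming function $b_\lambda(t)=\sqrt{\lambda t/\log|\log t|}=(\pi\sigma/\sqrt 8)\sqrt{t/\log|\log t|}$, which matches the target identity.

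\textbf{Sharp two-sided SD bound.} Write $X_t=\sigma B_t+Y_t$ where $B$ is a standard Brownian motion and $Y$ is an independent pure-jump L\'evy process with triplet $(\gamma,0,\Pi)$. For the \emph{lower} bound on $-\log\P$, apply Anderson's inequality to the centered Gaussian measure induced by $\sigma B$ on $C[0,t]$ against the symmetric convex body $\{f:||f||_\infty\le\eps\}$, conditioning on $Y$, to obtain $\P(||X||_t\le\eps)\le\P(||\sigma B||_t\le\eps)\le(4/\pi)\exp(-\pi^2\sigma^2 t/(8\eps^2))$ (the second inequality is the classical uniform bound for the Brownian supremum). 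For the \emph{upper} bound on $-\log\P$, the triangle inequality gives $\{||\sigma B||_t\le\eps(1-\delta)\}\cap\{||Y||_t\le\eps\delta\}\subseteq\{||X||_t\le\eps\}$, and independence yields
\begin{align*}
-\log\P(||X||_t\le\eps)\le -\log\P(||\sigma B||_t\le\eps(1-\delta))-\log\P(||Y||_t\le\eps\delta).
\end{align*}
In the LIL regime the first term is $(1+o(1))\pi^2\sigma^2 t/(8(1-\delta)^2\eps^2)$ by the sharp Brownian asymptotic, while Proposition~\ref{prop:ad} applied to the pure-jump $Y$ shows the second term is $o(t/\eps^2)$ because $U_Y(\eps\delta)\to 0$. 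Letting $\delta\downarrow 0$ closes the constants.

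\textbf{Applying the corollary.} The hypothesis~(\ref{eqn:conditionM}) is automatic: Cauchy--Schwarz and $\int x^2\Pi(\mathrm{d}x)<\infty$ give $|\int_{|x|>b}x\,\Pi(\mathrm{d}x)-\gamma|=O(1/b)$, and $1/b_{\lambda_2'}(n^{-n^\beta})\asymp\sqrt{n^{n^\beta}\log n}$ grows only polynomially in $n^{n^\beta}$, which is negligible next to the super-exponential decay of $(n+1)^{-(n+1)^\beta}$. Corollary~\ref{cor:t1corollary} then produces $\liminf_{t\to 0}||X||_t/b_\lambda(t)=1$ a.s., which rearranges to the asserted identity.

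\textbf{Main obstacle.} The delicate point is controlling $\P(||Y||_t\le\eps\delta)$ in the non-symmetric case, where the Esscher parameter $u^Y_\eps$ from Proposition~\ref{prop:ad} may blow up. When $Y$ has bounded variation with effective drift $c\neq 0$, Proposition~\ref{prop:bvnodrift} gives $||Y||_t\sim|c|t$; but since $\eps\asymp\sqrt{t/\log|\log t|}\gg t$ in the LIL regime, the event $\{||Y||_t\le\eps\delta\}$ is eventually almost certain and its negative log-probability is $o(1)$, comfortably subdominant to $t/\eps^2\to\infty$. For unbounded variation $Y$ the analogous $o(t/\eps^2)$ estimate follows from Proposition~\ref{prop:ad} together with $U_Y(\eps\delta)\to 0$, so the Brownian component genuinely dictates both the scale and the sharp constant.
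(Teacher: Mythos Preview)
Your approach coincides with the paper's: both establish the sharp small-deviation asymptotic $-\log\P(||X||_t\le\eps)\sim\tfrac{\pi^2\sigma^2}{8}\,t\eps^{-2}$, verify condition~(\ref{eqn:conditionM}), and then invoke Theorem~\ref{t3}/Corollary~\ref{cor:t1corollary}. The paper simply cites Corollary~2.6 of \cite{AD09} for the asymptotic, whereas you sketch a direct argument via Anderson's inequality (for the lower bound on $-\log\P$) and the triangle decomposition $X=\sigma B+Y$ (for the upper bound). Your verification of~(\ref{eqn:conditionM}) is equivalent to the paper's; the paper phrases the same estimate as $\int_{|x|>\eps}|x|\,\Pi(\d x)=o(\eps^{-1})$ and then checks $a_{n+1}=o(b(a_n)^2)$ for $a_n=n^{-n^\beta}$.

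There is, however, an incomplete step in your ``Main obstacle'' paragraph. For unbounded-variation non-symmetric $Y$ you claim the bound $-\log\P(||Y||_t\le\eps\delta)=o(t/\eps^2)$ follows from Proposition~\ref{prop:ad} ``together with $U_Y(\eps\delta)\to 0$''. But the proposition's upper bound is $10\,t\,F_Y(\eps/3)+\eps|u^Y_{\eps/3}|+3$, where $F_Y(\eps)=\eps^{-2}U_{Y,\eps}(\eps)-\Lambda_\eps(u^Y_\eps)$ involves the \emph{Esscher-tilted} variance $U_{Y,\eps}(\eps)=\eps^2\bar\Pi(\eps)+\int_{-\eps}^\eps x^2 e^{-u^Y_\eps x}\Pi(\d x)$ and the nonnegative term $-\Lambda_\eps(u^Y_\eps)$. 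The fact that the \emph{untilted} $U_Y(\eps)\to 0$ does not by itself control these, nor the correction $\eps|u^Y_\eps|$. To close the argument you would need to show $\eps^2 F_Y(\eps)\to 0$ (equivalently $\eps^{2}U_{Y,\eps}(\eps)\to 0$ and $\eps^2\Lambda_\eps(u^Y_\eps)\to 0$) and that $\eps|u^Y_\eps|=o(t/\eps^2)$ along the LIL scaling. This is true and not hard once one bounds $\eps|u^Y_\eps|$ (for instance by showing $\eps|u^Y_\eps|=O(1)$ in the unbounded-variation case, as in the examples of Corollary~\ref{pol}), but it does require an additional argument beyond what you wrote.
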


\begin{pf}
Following precisely the proof of Corollary~2.6 of Aurzada and Dereich
\cite{AD09}, one can show that the small deviation rates of L\'evy
processes with non-zero Brownian component are given by
\[
-\log\P(\Vert X\Vert_{t}<\eps)\sim\frac{\uppi^2\sigma^2}{8}\eps
^{-2}t,\qquad\mbox{as
$\eps\to0$ and $t\to0$.}
\]
Hence, the norming function follows from Theorem~\ref{t3}. As the
process is not necessarily symmetric, condition (\ref{eqn:conditionM})
has to be checked: Since $b(t)=\sqrt{t\uppi^2/ (8\log|\log t|)}$ and
$\int_{|x|>\eps} |x| \Pi(\dd x)=\mathrm{o}(\eps^{-1})$, it remains
to be seen that
\[
a_{n+1} \leq c b(a_n)^2= a_n / \log|\log a_n|
\]
for $a_n=n^{-n^\beta}$ and $\beta>1$. This can be verified by simple
computations.
\end{pf}

Similarly to L\'evy processes with non-zero Brownian component,
symmetric processes of smaller small deviation order (e.g.,\ stable
processes of smaller index) are dominated by stable L\'evy processes.

\begin{corollary}\label{cor:stables}
Let $X$ be a symmetric $\alpha$-stable L\'evy process with $\alpha\in
(0,2]$, and let $Y$ be symmetric with $U_Y(x)=\mathrm{o}(x^{2-\alpha
})$. Then
there is a constant $0<c_\alpha<\infty$ such that
\[
\liminf_{t\rightarrow0}\frac{\Vert X+Y\Vert_t}{(t/\log|\log
t|)^{1/\alpha
}}=\liminf_{t\rightarrow0}\frac{\Vert X\Vert_t}{(t/\log|\log
t|)^{1/\alpha
}}=c_\alpha^{1/\alpha}\qquad\mbox{a.s.}
\]
\end{corollary}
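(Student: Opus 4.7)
The plan is to reduce to the pure symmetric $\alpha$-stable case via the dominance principle and then identify the Chung constant via Blumenthal's zero-one law.

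First, for a symmetric $\alpha$-stable L\'evy process $X$, a direct integration from $\Pi_X(\d x)=c|x|^{-1-\alpha}\d x$ (or, when $\alpha=2$, from a non-trivial Brownian component) yields $U_X(\eps)=\kappa_\alpha\,\eps^{2-\alpha}$ for an explicit $\kappa_\alpha>0$. Hence $F_X(\eps)=\eps^{-2}U_X(\eps)=\kappa_\alpha\,\eps^{-\alpha}$ is regularly varying at zero with exponent $-\alpha$. Since $X$ and $Y$ are independent and symmetric, $U_{X+Y}(\eps)=U_X(\eps)+U_Y(\eps)$, and the hypothesis $U_Y(\eps)=o(\eps^{2-\alpha})$ gives $F_{X+Y}(\eps)\sim F_X(\eps)$ as $\eps\to 0$.

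Because $F_X$ is regularly varying with negative exponent, Corollary~\ref{cor:sddirectsymmetric} locates
\begin{align*}
\liminf_{t\to 0}\frac{||X||_t}{(t/\log|\log t|)^{1/\alpha}}
\end{align*}
in a bounded positive interval depending on $\kappa_\alpha$ and the universal AD09 constants; Blumenthal's zero-one law at $t=0$ then forces this liminf to an almost-sure deterministic constant, which we denote $c_\alpha^{1/\alpha}\in(0,\infty)$. The corresponding statement for $X+Y$ in place of $X$ follows from Corollary~\ref{cor:domination} with $X^1=Y$ and $X^2=X$, since $U_Y(\eps)/U_X(\eps)\to 0$.

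The main subtlety I anticipate is that Corollary~\ref{cor:sddirectsymmetric} only pins down each Chung constant up to a bounded interval, so a priori the limits for $X$ and for $X+Y$ could differ; the actual equality is exactly what the dominance principle supplies. If a more hands-on argument were needed, I would combine the triangle inequality $|\,||X+Y||_t-||X||_t\,|\leq ||Y||_t$ with a Khinchin-type bound $\limsup_{t\to 0}||Y||_t\cdot (t/\log|\log t|)^{-1/\alpha}=0$ a.s., obtained by truncating the jumps of $Y$ at the scale $\eta(t/\log|\log t|)^{1/\alpha}$, applying Doob's $L^2$-inequality to the small-jump martingale, and using Borel--Cantelli along a geometric subsequence $t_n\downarrow 0$ together with the monotonicity of $||Y||_t$ in $t$.
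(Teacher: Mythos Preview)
Your proof is correct and close in spirit to the paper's, but differs in one key input for the $X$ part. The paper cites the \emph{strong} small deviation asymptotic
\[
-\log \P(||X||_t<\eps)\sim c_\alpha\,\eps^{-\alpha}t,\qquad \eps\to 0,\ t\to 0,
\]
for symmetric $\alpha$-stable processes (page~220 of \cite{B96}) and then applies Corollary~\ref{cor:t1corollary} directly to identify the Chung constant as exactly $c_\alpha^{1/\alpha}$, where $c_\alpha$ is the small deviation constant itself. You instead use only the weak estimate coming from Corollary~\ref{cor:sddirectsymmetric} and invoke Blumenthal's zero-one law to produce \emph{some} deterministic constant in $(0,\infty)$, which you then name $c_\alpha^{1/\alpha}$. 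This is logically sufficient for the statement as written and avoids citing the external strong asymptotic; the price is that your $c_\alpha$ is not identified with anything concrete, so the remark following the corollary---which ties $c_\alpha$ to the unknown stable small deviation constant and gives explicit bounds---would not follow from your argument. For the $X+Y$ part both proofs simply invoke Corollary~\ref{cor:domination}; your alternative hands-on route via $\limsup_{t\to 0}||Y||_t\,(t/\log|\log t|)^{-1/\alpha}=0$ is viable but unnecessary given that result.
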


\begin{pf}
The small deviation rate is given by
\[
-\log\P(\Vert X\Vert_{t}<\eps)\sim c_{\alpha} \eps^{-\alpha
}t,\qquad\mbox{as
$\eps\to0$ and $t\to0$}
\]
for some constant $c_{\alpha}>0$ (see, e.g., page 220 in Bertoin \cite{B96}).
Hence, the LIL follows from Corollary~\ref{cor:t1corollary} and
Corollary~\ref{cor:domination}.
\end{pf}

%
%re5 #&#
%
\begin{rem}
The constant $c_\alpha$ in the LIL of stable L\'evy processes is the
unknown constant of the small deviations for respective $\alpha$-stable
L\'evy processes (see Taylor \cite{taylor} and Proposition~3 and Theorem~6
in Chapter VIII of Bertoin \cite{B96}). The results of Aurzada and Dereich
\cite{AD09} entail the following concrete bounds:
\[
\frac{2 C }{2^{\alpha}} \biggl( \frac{1}{\alpha} + \frac
{1}{12(2-\alpha)} \biggr)
< c_\alpha< 3^\alpha\cdot2 C \biggl( \frac{1}{\alpha} + \frac
{10}{2-\alpha}
\biggr),
\]
where $C$ is the constant in the L\'evy measure: $\Pi(\dd x)=C
|x|^{-(1+\alpha)}\,\dd x$. This implies $c_\alpha\sim2 C/\alpha$, as
$\alpha\to0$. We remark that, contrary to the symmetric case, the
constant $c_\alpha$ is known explicitly for completely asymmetric
stable L\'evy processes; see Bertoin \cite{bertoin96}.
\end{rem}

Let us study the case when $\Pi$ behaves as a regularly varying
function at zero and is symmetric. Then the following LIL are satisfied.

\begin{corollary}
Let $X$ be a L\'evy process with triplet $(0,0,\Pi)$ with $\Pi$ being
symmetric and
\[
\bar\Pi(\eps)\approx\eps^{-\alpha} |\log\eps|^{-\gamma},\qquad
\mbox{as
$\eps\to0$,}
\]
with $0<\alpha<2$ or $\alpha=2, \gamma>1$. Then
\[
\liminf_{t\rightarrow0}\frac{\Vert X\Vert_{t}}{b(t)}\in(0,\infty
)\qquad\mbox{a.s.}
\]
with
\[
b(t)=
\cases{\displaystyle
\biggl(\frac{t|\log t|^{-\gamma}}{\log|\log t|} \biggr)^{1/\alpha
},&\quad
$0<\alpha<2$, \vspace*{2pt}\cr
\displaystyle\biggl(\frac{t|\log t|^{1-\gamma}}{\log|\log t|}
\biggr)^{1/2}, &\quad$\alpha
=2, \gamma>1$.
}
\]
\end{corollary}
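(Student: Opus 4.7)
My plan is to derive this directly from Corollary~\ref{cor:sddirectsymmetric}. Since $\sigma=0$ and $\Pi$ is symmetric, the small-deviation rate of $X$ is $F(\eps)=\eps^{-2}U(\eps)$ with $U(\eps)=\eps^{2}\bar\Pi(\eps)+\int_{-\eps}^{\eps}x^{2}\Pi(\d x)$, and the candidate norming functions are $b_\lambda(t)=F^{-1}(\log|\log t|/(\lambda t))$. The first step is to simplify $U$: using the symmetry of $\Pi$, the representation $x^{2}=\int_{0}^{|x|}2u\,\d u$, and Fubini, the boundary term $\eps^{2}\bar\Pi(\eps)$ is cancelled by the boundary contribution from the integral, leaving the clean identity
\begin{align*}
	U(\eps)=\int_{0}^{\eps}2u\,\bar\Pi(u)\,\d u.
\end{align*}

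Next, I would insert the hypothesis $\bar\Pi(u)\approx u^{-\alpha}|\log u|^{-\gamma}$ into this expression. For $0<\alpha<2$, Karamata's theorem applied to the regularly varying integrand $u^{1-\alpha}|\log u|^{-\gamma}$ of index $1-\alpha>-1$ yields $U(\eps)\approx \eps^{2-\alpha}|\log\eps|^{-\gamma}$, and hence $F(\eps)\approx \eps^{-\alpha}|\log\eps|^{-\gamma}$. In the boundary case $\alpha=2$, $\gamma>1$, the index is $-1$ and Karamata does not apply directly; instead the substitution $v=|\log u|$ gives
\begin{align*}
	\int_{0}^{\eps}\frac{2\,\d u}{u|\log u|^{\gamma}}=\int_{|\log\eps|}^{\infty}\frac{2\,\d v}{v^{\gamma}}=\frac{2}{\gamma-1}|\log\eps|^{1-\gamma},
\end{align*}
which is finite precisely because $\gamma>1$; therefore $U(\eps)\approx|\log\eps|^{1-\gamma}$ and $F(\eps)\approx \eps^{-2}|\log\eps|^{1-\gamma}$.

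The third step is to invert $F$ up to weak asymptotic equivalence. Setting $F(b_\lambda(t))=\log|\log t|/(\lambda t)$ and taking logarithms, the slowly varying logarithmic factors contribute only a lower-order term, so to leading order $|\log b_\lambda(t)|\sim \alpha^{-1}|\log t|$ in the regime $0<\alpha<2$ and $|\log b_\lambda(t)|\sim |\log t|/2$ in the regime $\alpha=2$. Substituting this crude estimate back into the defining equation and solving for $b_\lambda$ yields
\begin{align*}
	b_\lambda(t)\approx\left(\frac{t|\log t|^{-\gamma}}{\log|\log t|}\right)^{1/\alpha}\ (0<\alpha<2),\qquad b_\lambda(t)\approx\left(\frac{t|\log t|^{1-\gamma}}{\log|\log t|}\right)^{1/2}\ (\alpha=2),
\end{align*}
which are precisely the stated $b(t)$ up to a positive multiplicative constant independent of $t$. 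Since this weak equivalence $b_\lambda\approx b$ holds for every fixed $\lambda>0$, the two bounds in Corollary~\ref{cor:sddirectsymmetric} immediately translate into $\liminf_{t\to0}||X||_{t}/b(t)\in(0,\infty)$ almost surely.

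The main obstacle is the last step: $F$ is only weakly (not strongly) asymptotically equivalent to its leading expression and is not regularly varying, so the constants $\lambda_{1},\lambda_{2}$ of Corollary~\ref{cor:sddirectsymmetric} propagate into multiplicative constants in $b_\lambda(t)$ that do not cancel. This is why the conclusion is only membership of the liminf in $(0,\infty)$ rather than an explicit value. A secondary point is that the hypothesis $\gamma>1$ in the critical case $\alpha=2$ is exactly the threshold that makes $U$ finite and the entire inversion well-posed.
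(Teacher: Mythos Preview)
Your argument is correct and essentially parallel to the paper's, which also reduces everything to the small-deviation rate $F(\eps)\approx\eps^{-\alpha}|\log\eps|^{-\gamma}$ (respectively $\eps^{-2}|\log\eps|^{1-\gamma}$) and then inverts. The only difference is the entry point: the paper quotes Theorem~\ref{t2} together with Proposition~\ref{prop:ad} (noting $u_\eps=0$ by symmetry, so the Esscher term vanishes), whereas you go straight through Corollary~\ref{cor:sddirectsymmetric} and compute $U(\eps)=\int_0^\eps 2u\,\bar\Pi(u)\,\d u$ explicitly. Your route is in fact slightly cleaner for a symmetric process, because Corollary~\ref{cor:sddirectsymmetric} already has the symmetry built in and you avoid having to verify the regularity condition~(\ref{eqn:regularityofb}) required by Theorem~\ref{t2}; on the other hand, the paper's route would cover asymmetric perturbations with the same $\bar\Pi$-asymptotics with no extra work.

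One small remark on your closing paragraph: it is not quite right to assert that $F$ ``is not regularly varying''. What is true is that the hypothesis only gives $F\approx G$ with $G$ regularly varying, and weak equivalence does not force $F$ itself to be regularly varying. Your conclusion---that the constants $\lambda_1,\lambda_2$ survive as unspecified multiplicative constants in $b$, hence only $\liminf\in(0,\infty)$---is nevertheless correct and is exactly what the paper obtains.
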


\begin{pf}
The corollary follows from Theorem~\ref{t3}. The required small
deviation estimate,
\[
-\log\P(\Vert X\Vert_{t}<\eps)\approx
\cases{
\eps^{-\alpha}|\log\eps|^{-\gamma} t,&\quad$0<\alpha<2$,\vspace
*{2pt}\cr
\eps^{-2}|\log\eps|^{1-\gamma}t,&\quad$\alpha=2, \gamma>1$,
}
\]
as $\eps\to0$ and $t\to0$, is obtained from Proposition~\ref{prop:ad}
(cf. Example~2.2 in Aurzada and Dereich \cite{AD09} for $t=1$). Since
we deal with a
symmetric process, condition (\ref{eqn:cond-esschervanishes}) is
trivially satisfied due to $u_{\eps}=0$.
\end{pf}

Having discussed the $\alpha$-stable like cases, we now consider L\'
evy processes with polynomial tails near zero of \textit{different}
exponents. The technique used for this example can be extended to any
case with essentially regularly varying L\'{e}vy measure at zero. Let
$X$ be a L\'evy process with triplet $(\gamma,0,\Pi)$, where $\Pi$ is
given by
%
%e1 #&#
%e1 ###
%
\begin{equation}\label{eq:regularlm}
\frac{\Pi(\dd x)}{\dd x} = \frac{C_1 \ind
_{(0,1]}(x)}{x^{1+\alpha_1}} + \frac{C_2 \ind
_{[-1,0)}(x)}{(-x)^{1+\alpha_2}},
\end{equation}
with $2>\alpha_1\geq\alpha_2$ and $C_1,C_2\geq0$, $C_1+C_2\neq0$. We
now analyze the pathwise behavior at zero in the cases when $\alpha
_1>1$, $\alpha_1=1$, and $0<\alpha_1<1$, respectively. The second
exponent $\alpha_2$ can be even negative.

\begin{corollary}\label{pol} Let $X$ be a L\'evy process with triplet
$(\gamma,0,\Pi)$ with $\Pi$ as in (\ref{eq:regularlm}). Then the
following holds:
\begin{enumerate}
\item If $\alpha_1\geq\alpha_2$, $C_1\neq0$, and $\alpha_1>1$, then
\[
\liminf_{t\rightarrow0}\frac{\Vert X\Vert_t}{(t / \log|\log
t|)^{1/\alpha
_1}}\in(0,\infty)\qquad\mbox{a.s.}
\]
%
% \item If $\alpha_1>\alpha_2$ and $\alpha_1=1$ (or $\alpha_1=
% \begin{align*}
% \liminf_{t\rightarrow0}\frac{\Vert X\Vert_t}{t |\log t|}\in(0,\infty)
% \end{align*}
%
\item If $\alpha_1=\alpha_2=1$ and $C_1=C_2$, then
\[
\liminf_{t\rightarrow0}\frac{\Vert X\Vert_t}{t / \log|\log t|}\in
(0,\infty
)\qquad\mbox{a.s.}
\]
%
% \begin{align*}
% \liminf_{t\rightarrow0}\frac{\Vert X\Vert_t}{(t / \log|\log t|)^{1/
% \end{align*}
%
\item If $1>\alpha_1\geq\alpha_2$ and the effective drift does not
vanish, then
\[
\lim_{t\rightarrow0}\frac{\Vert X\Vert_t}{t} = |c|\qquad\mbox{a.s.}
\]
\end{enumerate}
\end{corollary}

\begin{pf}
Parts 1 and 2 follow from Theorem~\ref{t3}. The required small
deviation estimates,
\[
-\log\P(\Vert X\Vert_{t}<\eps)\approx\eps^{-\alpha_1}t
\]
for $\eps\to0$ and $t\to0$, are obtained from Proposition~\ref
{prop:ad} (cf.\ Corollary~2.7,~2.8 and~2.9 of Aurzada and Dereich \cite
{AD09} for
$t=1$; note that $u_\eps\approx\eps^{-1}$ in all cases). One can
easily check condition (\ref{eqn:cond-esschervanishes}).

In part 3 the process is of bounded variation, so that the claim is
included in Proposition~\ref{prop:bvnodrift}.
\hspace*{1pt}
\end{pf}

We now come to L\'evy processes obtained from Brownian motion by
subordination, that is, $X_t=\sigma B_{A_t}$, where $B$ is a Brownian
motion independent of the subordinator $A$. In this case, the resulting
L\'evy process is symmetric and the small deviation asymptotics is
governed by the truncated variance $U$ from (\ref{eqn:defnU}).

\begin{corollary}
Let $B$ be a Brownian motion independent of the subordinator $A$, where
$A$ has Laplace exponent $\Phi$. For $\lambda>0$ we set $b_\lambda
(t):=F^{-1} (\frac{\log|\log t|}{\lambda t} )$ with
\[
F(\eps):= \Phi(\sigma^2\eps^{-2}) + \gamma_A \sigma^2 \eps^{-2}.
\]
Then, for some $\lambda_1, \lambda_2>0$,
\[
1\leq\liminf_{t\rightarrow0}\frac{\Vert X\Vert_t}{b_{\lambda_1}(t)}
\quad\mbox{and}\quad\liminf_{t\rightarrow0}\frac{\Vert X\Vert
_t}{b_{\lambda
_2}(t)}\leq1\qquad\mbox{a.s.}
\]
In particular, if $\gamma_A=0$ and $\Phi$ is regularly varying with
positive exponent, we have
\[
\liminf_{t\rightarrow0}\frac{\Vert X\Vert_t}{(\Phi^{-1} (\log|
\log t| / t
))^{-1/2}}\in(0,\infty)\qquad\mbox{a.s.}
\]
\end{corollary}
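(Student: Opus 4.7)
The plan is to invoke Corollary~\ref{cor:sddirectsymmetric} (and, for the supplementary statement, Corollary~\ref{cor:t1corollary}). Since $X_t = \sigma B_{A_t}$ inherits symmetry from $B$, condition (\ref{eqn:conditionM}) of Theorem~\ref{t3} is automatic, so the whole task reduces to identifying $\eps^{-2} U_X(\eps)$ with the function $F$ from the corollary up to multiplicative constants. From $\E e^{i z X_t} = e^{-t\Phi(\sigma^2 z^2/2)}$ and the L\'evy--Khintchine form of $\Phi$, inserting the Gaussian characteristic function $e^{-\sigma^2 z^2 s/2} = \int_{\R} e^{izy}(2\pi\sigma^2 s)^{-1/2} e^{-y^2/(2\sigma^2 s)}\,\d y$ shows that $X$ is symmetric with Brownian variance $\sigma_X^2 = \gamma_A \sigma^2$ and L\'evy density $\pi_X(y) = \int_0^\infty (2\pi\sigma^2 s)^{-1/2} e^{-y^2/(2\sigma^2 s)}\, \Pi_A(\d s)$.

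By Fubini, $U_X(\eps) = \gamma_A \sigma^2 + \int_0^\infty \E[(\sigma B_s)^2 \wedge \eps^2]\, \Pi_A(\d s)$. An elementary Gaussian computation gives $\E[(\sigma B_s)^2 \wedge \eps^2] \asymp \min(\sigma^2 s, \eps^2)$ uniformly in $s,\eps$, and integration by parts yields $\int_0^\infty \min(\sigma^2 s, \eps^2)\, \Pi_A(\d s) = \sigma^2 \int_0^{\eps^2/\sigma^2} \bar\Pi_A(s)\,\d s$. On the other hand, integration by parts in the Laplace exponent produces $\Phi(u) = \gamma_A u + u\int_0^\infty e^{-us}\bar\Pi_A(s)\,\d s$; setting $u=\sigma^2/\eps^2$ and noting that $e^{-us}\in[e^{-1},1]$ on $[0,1/u]$ and decays exponentially beyond, the Laplace integral is weakly equivalent to its truncated counterpart:
\begin{align*}
 \eps^2 \Phi(\sigma^2/\eps^2) - \gamma_A \sigma^2 \;=\; \sigma^2 \int_0^\infty e^{-\sigma^2 s/\eps^2} \bar\Pi_A(s)\,\d s \;\asymp\; \sigma^2 \int_0^{\eps^2/\sigma^2} \bar\Pi_A(s)\,\d s.
\end{align*}
Combining the two chains of equivalences, and using $\Phi(u)\geq \gamma_A u$ to absorb the residual constant $\gamma_A\sigma^2$, I conclude $U_X(\eps) \asymp \eps^2 F(\eps)$.

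With this comparison in hand, the symmetric-case small deviation bound coming from Corollary~\ref{cor:sddirectsymmetric} reads $-\log \P(||X||_t < \eps) \asymp F(\eps)\, t$ as $\eps,t \to 0$, so Theorem~\ref{t3} delivers the stated two-sided LIL with $b_\lambda$ defined through this $F$. For the ``In particular'' part, $\gamma_A=0$ simplifies $F$ to $\Phi(\sigma^2/\eps^2)$; regular variation of $\Phi$ at infinity with positive exponent $\rho$ makes $F$ regularly varying at zero with negative exponent $-2\rho$, so Corollary~\ref{cor:t1corollary} gives $\liminf_{t\to 0} ||X||_t / b_1(t) \in (0,\infty)$, and inverting $F$ yields $b_1(t) = \sigma\,(\Phi^{-1}(\log|\log t|/t))^{-1/2}$, matching the displayed ratio up to the harmless factor $\sigma$. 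The main obstacle is the weak equivalence $U_X(\eps) \asymp \eps^2 \Phi(\sigma^2/\eps^2)$ itself: the individual ingredients (Gaussian moment comparison, the two integration-by-parts identities, and the Laplace-versus-truncated comparison of $\bar\Pi_A$) are each elementary, but matching the $\gamma_A$ contributions, which enter simultaneously through the Brownian part of $X$ and through the linear part of $\Phi$, requires careful bookkeeping.
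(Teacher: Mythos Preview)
Your proof is correct and follows essentially the same route as the paper: both apply Theorem~\ref{t3} (you via Corollary~\ref{cor:sddirectsymmetric}) after establishing the small deviation estimate $-\log\P(||X||_t\leq\eps)\approx F(\eps)\,t$, with condition~(\ref{eqn:conditionM}) being trivial by symmetry. The only difference is that the paper imports the key equivalence $\eps^{-2}U_X(\eps)\approx \Phi(\sigma^2\eps^{-2})+\gamma_A\sigma^2\eps^{-2}$ from Example~2.13 of \cite{AD09} via Proposition~\ref{prop:ad}, whereas you derive it from scratch through the subordination formula for $\Pi_X$, the Gaussian comparison $\E[(\sigma B_s)^2\wedge\eps^2]\asymp\min(\sigma^2 s,\eps^2)$, and the two integration-by-parts identities linking $\int_0^{1/u}\bar\Pi_A(s)\,\d s$ to both $\int(\sigma^2 s\wedge\eps^2)\,\Pi_A(\d s)$ and $\Phi(u)-\gamma_A u$; your self-contained computation is exactly what that cited example does.
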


\begin{pf}
The corollary follows from Theorem~\ref{t3} with the small deviation
estimate from Proposition~\ref{prop:ad},
\[
-\log\P( \Vert X\Vert_t \leq\eps) \approx\bigl(\Phi(\sigma
^2\eps^{-2}) + \gamma
_A \sigma^2 \eps^{-2}\bigr)t,
\]
as $\eps\to0$ and $t\to0$ (cf. Example~2.13 of Aurzada and Dereich
\cite{AD09} for
$t=1$ and note the misprint there). Condition (\ref{eqn:conditionM}) is
trivially fulfilled as the process is symmetric.
\end{pf}

For a more specific example, in particular, exhibiting exotic small
time behavior, we choose the subordinator $A$ to be a Gamma process.
Then one defines the so called Variance-Gamma process as
\[
X_t=\sigma B_{A_t}+\mu A_t
\]
for some constants $\sigma\neq0$ and $\mu\in\R$.

\begin{corollary}%\label{Variance-Gamma}
Let $X$ be a Variance-Gamma process; then for $\mu=0$ there are some
constants $0<\lambda_1\leq\lambda_2<\infty$ such that
\begin{equation} \label{eqn:constinexp}
1\leq\liminf_{t\rightarrow0}\frac{\Vert X\Vert_t}{ \mathrm{e}^{ -
\lambda_1 \log|\log
t|/t}}\quad\mbox{and}\quad\liminf_{t\rightarrow0}\frac{\Vert
X\Vert_t}{ \mathrm{e}^{- \lambda_2
\log|\log t|/t}}\leq1\qquad\mbox{a.s.},
\end{equation}
whereas for $\mu\neq0$
\[
\liminf_{t\rightarrow0}\frac{\Vert X\Vert_t}{t}=|\mu| \E(
A_1)\qquad\mbox{ a.s.}
\]
\end{corollary}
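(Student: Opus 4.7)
The argument splits naturally according to whether the drift parameter $\mu$ vanishes.

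For $\mu=0$, the process $X_t=\sigma B_{A_t}$ is a symmetric subordinated Brownian motion, so the preceding corollary applies with $A$ a Gamma subordinator. The Gamma process has $\gamma_A=0$ and Laplace exponent satisfying $\Phi(u)\sim \alpha\log u$ as $u\to\infty$ (for an appropriate $\alpha>0$). Consequently the rate function from that corollary reduces to $F(\eps)=\Phi(\sigma^2\eps^{-2})\sim 2\alpha|\log\eps|$ as $\eps\to 0$, and inverting $F(b)=\log|\log t|/(\lambda t)$ yields $b_\lambda(t)\sim \exp\bigl(-\log|\log t|/(2\alpha\lambda t)\bigr)$. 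This is precisely the exponential norming in (\ref{eqn:constinexp}), with the two constants $\lambda_1,\lambda_2$ of that display corresponding to appropriate multiples of the reciprocals of the constants in the underlying small-deviation estimate. Since $X$ is symmetric, condition~(\ref{eqn:conditionM}) of the underlying Theorem~\ref{t3} is automatic, completing this case.

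For $\mu\neq 0$, the plan is to reduce everything to Proposition~\ref{prop:bvnodrift}. First, verify bounded variation: using the Lévy-measure representation $\Pi_X(\d y)=\int_0^\infty \phi_{\mu a,\sigma^2 a}(y)\,\Pi_A(\d a)\,\d y$ with $\phi_{m,v}$ the $N(m,v)$-density, one obtains the bound $\int|y|\,\Pi_X(\d y)\le\int_0^\infty(|\mu|a+\sigma\sqrt{a})\,\Pi_A(\d a)<\infty$, the integral being finite since the Gamma Lévy measure $\Pi_A(\d a)=\alpha a^{-1}e^{-\lambda_A a}\,\d a$ integrates both $1$ and $a^{-1/2}$ against $e^{-\lambda_A a}\,\d a$. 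Second, compute the effective drift $c=\gamma-\int_{-1}^{1}y\,\Pi_X(\d y)$ directly from the explicit form $\psi_X(z)=-\Phi(\sigma^2 z^2/2-iz\mu)$, by identifying it with the bounded-variation canonical form $\psi_X(z)=icz+\int(e^{izy}-1)\,\Pi_X(\d y)$; one reads off $|c|=|\mu|\,\E A_1$. Proposition~\ref{prop:bvnodrift} then gives $\lim_{t\to 0}||X||_t/t=|\mu|\,\E A_1$ almost surely, so in particular the liminf equals the same value.

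The main obstacle lies in the $\mu=0$ case. Because $F$ grows only logarithmically, the function $b_\lambda$ is of the essentially double-exponential form $\exp(-C_\lambda\log|\log t|/t)$ in which $\lambda$ enters directly in the exponent; any looseness in the constants of the small-deviation estimate propagates into a completely different norming function. This is exactly the situation for which the sharp two-constant form of Theorem~\ref{t3} was designed, and is the reason a weak equivalence of the type $F_1\approx F_2$ would not suffice here; one must carry through the small-deviation estimate with explicit constants and invoke Theorem~\ref{t3} in its strong form.
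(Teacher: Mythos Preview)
Your overall strategy matches the paper's: for $\mu=0$ you go through the subordinated-Brownian-motion corollary, which is itself proved from Theorem~\ref{t3} and Proposition~\ref{prop:ad}, so this amounts to the paper's direct appeal to those two results; and for $\mu\neq 0$ both you and the paper invoke Proposition~\ref{prop:bvnodrift}. The $\mu=0$ case is handled correctly, and your closing remark about the constants entering the exponent is exactly the point the paper makes immediately after the corollary.

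There is, however, a genuine gap in the $\mu\neq 0$ step: your claim that one ``reads off $|c|=|\mu|\,\E A_1$'' from the canonical bounded-variation form is wrong. Expanding $\psi_X(z)=-\Phi(\sigma^2 z^2/2-iz\mu)$ to first order in $z$ indeed gives the linear term $i\mu\,\Phi'(0)\,z=i\mu\,\E A_1\,z$, but in $\psi_X(z)=icz+\int(e^{izy}-1)\,\Pi_X(\d y)$ the integral also contributes a linear term $iz\int y\,\Pi_X(\d y)=iz\int_0^\infty \mu a\,\Pi_A(\d a)=iz\,\mu\,\E A_1$ (using $\gamma_A=0$ and your own formula for $\Pi_X$). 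These match exactly, forcing $c=0$, not $|\mu|\,\E A_1$. Equivalently, since $\gamma_A=0$ the Gamma subordinator is pure jump, so $X_t=\sigma B_{A_t}+\mu A_t$ is a pure-jump bounded-variation process with \emph{zero} effective drift; Proposition~\ref{prop:bvnodrift} therefore does not apply, and the argument in its proof actually yields $||X||_t/t\to 0$ almost surely. The paper's one-line justification (``the process is of bounded variation with non-zero effective drift'') carries the same oversight, so the asserted limit $|\mu|\,\E A_1$ for $\mu\neq 0$ is not established by either argument.
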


\begin{pf}
The second part is included in Proposition~\ref{prop:bvnodrift}, since
the process is of bounded variation with non-zero effective drift. In
the first part, the effective drift is zero, and the claim follows from
Theorem~\ref{t3}. The small deviation estimate,
\[
-\log\P( \Vert X\Vert_t \leq\eps) \approx t |\log\eps|,\qquad
\mbox{as $\eps\to0$
and $t\to0$},
\]
follows from Proposition~\ref{prop:ad} (cf. Example 2.12 of Aurzada
and Dereich \cite{AD09} for $t=1$).
\end{pf}

%
%Variance-Gamma processes with $\mu\neq0$ are not included in the
%results of Wee \cite{W88} because of the strong asymmetry.

In the first case of the previous corollary, the dependence of good
small deviation estimates and good LIL becomes transparant. The fact
that we cannot specify the constants $\lambda_1, \lambda_2$ in (\ref
{eqn:constinexp}) is only caused by the weak asymptotics for the small
deviation estimate as we do not lose any further constants in the
transfer of small deviations to the LIL. If one does not have more
control on the constants $\lambda_1, \lambda_2$, the understanding of
the precise small time behavior of $X$ is far from optimal as the error
enters exponentially.

%s4 #&#
%s4 ###
\section{Proofs} \label{sec:proofs}

% \begin{proposition}\label{prop}
% Let $F$ be a function that increases to infinity at zero and let $
%solving
% \begin{align*}
% \log|\log t | &=\lambda F(b_\lambda(t))t, t\in[0,1/e].
% \end{align*}
% Furthermore, $b_\lambda(t)$ is increasing in $\lambda$.
% \end{proposition}
% \begin{proof}
% Define
% \begin{align*}
% b_\lambda(t)&=F^{-1}(\log|\log t |/\lambda t), t\in[0,1/e].
% \end{align*}
% Obviously, $b_\lambda$ solves the equation. Since $F$ is increasing
%so is $b_\lambda$.
% \end{proof}

We start with a lemma which shows that the small deviation order is at
least as large as the term induced by the variance, defined in (\ref
{eqn:defnU}).
\begin{lemma}
Let $\eps>0$, and let $X$ be a L\'evy process with L\'evy measure
concentrated on $[-\eps,\eps]$, then
\[
\P( \Vert X\Vert_t \leq\eps/2 )\leq\exp\biggl( - \eps^{-2}
\biggl(\int_{-\eps}^\eps x^2
\Pi(\dd x) + \sigma^2 \biggr) t\big/12+1 \biggr)\qquad\mbox{for
$t\geq0$.}
\]
\end{lemma}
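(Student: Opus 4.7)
Plan. My approach will split into a symmetrization step and a cosine-martingale argument, because for \emph{symmetric} Lévy processes the process $\cos(zX_s)\exp(-s\,\mathrm{Re}\,\psi(z))$ is a clean real-valued martingale amenable to optional stopping. The symmetrization reduces the general case to the symmetric one at the cost of a square root in the probability.

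I will first set $\tilde X := X - X'$ for an independent copy $X'$ of $X$. Then $\tilde X$ is a symmetric L\'evy process with Gaussian variance $2\sigma^2$ and L\'evy measure $\tilde\Pi(A) = \Pi(A)+\Pi(-A)$, still supported in $[-\eps,\eps]$; in particular $\int x^2\,\tilde\Pi(\d x) + 2\sigma^2 = 2V$, where $V := \sigma^2 + \int x^2\,\Pi(\d x)$. Since $\{||X||_t\leq\eps/2\}\cap\{||X'||_t\leq\eps/2\}\subseteq\{||\tilde X||_t\leq\eps\}$ and the two events on the left are independent,
$$\P(||X||_t\leq\eps/2)^2 \leq \P(||\tilde X||_t\leq\eps).$$
For the right-hand side I will use that by symmetry of $\tilde X$ the characteristic function $\E e^{iz\tilde X_s} = e^{-sf(z)}$ is real, where $f(z) := -\mathrm{Re}\,\psi_{\tilde X}(z)\geq 0$, so that $M_s := \cos(z\tilde X_s)\,e^{sf(z)}$ is a bounded martingale. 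With $z := 1/(2\eps)$ and $T := \inf\{s : |\tilde X_s|>\eps\}$, the key geometric observation is that the jumps of $\tilde X$ are still bounded by $\eps$ (a.s.\ $X$ and $X'$ never jump at the same time), so that $|\tilde X_T|\leq 2\eps$ and hence $\cos(z\tilde X_T)\geq\cos 1 > 0$; and on $\{T>t\}$ one has $|z\tilde X_t|\leq 1/2$, giving $\cos(z\tilde X_t)\geq\cos(1/2) > 0$. Optional stopping then yields
$$1 = \E M_{t\wedge T} \geq \cos(1/2)\,e^{tf(z)}\,\P(T>t),$$
so that $\P(||\tilde X||_t\leq\eps) = \P(T>t)\leq e^{-tf(z)}/\cos(1/2)\leq e^{1-tf(z)}$, using $1/\cos(1/2) < e$.

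To close the argument I need the lower bound $f(1/(2\eps))\geq V/(6\eps^2)$. This will follow from the elementary inequality $1-\cos u \geq (u^2/2)(1-u^2/12)$, valid for all $u\in\mathbb R$, which for $|u|\leq 1/2$ yields $1-\cos u\geq (47/96)u^2$. Substituting $u = x/(2\eps)$ with $|x|\leq\eps$ into
$$f(1/(2\eps)) = \frac{\sigma^2}{4\eps^2} + \int_{-\eps}^{\eps}\!(1-\cos(x/(2\eps)))\,\tilde\Pi(\d x)$$
and using $\int x^2\,\tilde\Pi = 2\int x^2\,\Pi$ gives comfortably more than the required $V/(6\eps^2)$. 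Plugging back, the chain $\P(||X||_t\leq\eps/2)^2\leq e^{1 - tV/(6\eps^2)}$ produces, upon taking a square root, $\P(||X||_t\leq\eps/2)\leq e^{1/2 - tV/(12\eps^2)}\leq e^{1 - tV/(12\eps^2)}$, as claimed. The main obstacle will be the careful bookkeeping of constants: the loss of a factor $2$ in the exponent from symmetrization must be exactly compensated by the favorable factor $47/192 > 1/6$ coming from the cosine Taylor bound. The optional-stopping step itself is routine thanks to the boundedness of $M$ on $[0,t\wedge T]$.
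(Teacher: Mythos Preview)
Your proof is correct and takes a genuinely different route from the paper's.

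The paper argues directly, without symmetrization: it lets $\tau$ be the first exit time of $X$ from $[-\eps,\eps]$, uses Wald's second identity to get $\E\tau\le 4\eps^2/V$ with $V=\sigma^2+\int x^2\,\Pi(\d x)$, applies Markov's inequality to obtain $\P(\tau\ge 8\eps^2/V)\le 1/2$, and then subdivides $[0,t]$ into $n=\lfloor tV/(8\eps^2)\rfloor$ blocks of length $8\eps^2/V$, using independence of increments to conclude $\P(\|X\|_t\le\eps/2)\le 2^{-n}$. This is an exit-time--plus--geometric-decay argument in the spirit of renewal estimates.

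Your approach instead passes to the symmetrized process $\tilde X=X-X'$ and exploits that the real exponential martingale $\cos(z\tilde X_s)\,e^{sf(z)}$ is available for symmetric L\'evy processes; optional stopping at the exit time from $[-\eps,\eps]$ then yields an exponential bound directly, with the crucial geometric observation that jumps of $\tilde X$ stay bounded by $\eps$ so the cosine never turns negative. The factor~$2$ lost in symmetrization is recouped because the cosine Taylor bound delivers $47/192>1/6$ rather than merely $1/12$.

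In terms of what each buys: the paper's route is more elementary and needs no symmetrization, but its Wald step $\mathrm{var}[X_{t\wedge\tau}]=V\,\E[t\wedge\tau]$ requires some care when $X$ has nonzero mean. Your Fourier/martingale argument handles the asymmetric case cleanly via symmetrization and makes the constants emerge transparently from the choice $z=1/(2\eps)$; it is also closer in spirit to the sharp spectral estimates one uses for Brownian motion. Both proofs arrive at the same constant $1/12$ in the exponent.
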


\begin{pf}
We proceed similarly to Lemma~4.2 in~Aurzada and Dereich \cite{AD09}.
Let $\tau$ be the first exit time of $X$ out of $[-\eps,\eps]$. Then,
by Wald's identity,
\begin{eqnarray*}
4 \eps^2 & \geq&\limsup_{t\to\infty} \E[ X^2_{t\wedge\tau}]
\geq
\limsup_{t\to\infty} \var[ X_{t\wedge\tau}] \\
&=& \limsup_{t\to
\infty} \biggl(\int_{-\eps}^\eps x^2 \Pi(\dd x) + \sigma^2 \biggr
) \E[ t\wedge\tau]
= \biggl(\int_{-\eps}^\eps x^2 \Pi(\dd x) + \sigma^2 \biggr) \E[
\tau].
\end{eqnarray*}
Therefore,
\[
\P\biggl(\tau\geq8 \eps^2 \big/ \biggl(\int_{-\eps}^\eps x^2
\Pi(\dd x) + \sigma^2 \biggr) \biggr)
\leq\frac{ (\int_{-\eps}^\eps x^2 \Pi(\dd x) + \sigma^2 ) \E
[\tau]}{8
\eps^2} \leq\frac12.
\]
Let $n:=\lfloor t(\int_{-\eps}^\eps x^2 \Pi(\dd x) + \sigma^2)/(8
\eps
^2) \rfloor$, and set $t_i := 8i \eps^2 / (\int_{-\eps}^\eps x^2
\Pi(\dd
x) + \sigma^2)$, $i=0,\ldots, n$. Then
\[
\P( \Vert X\Vert_t \leq\eps/2) \leq\P\Bigl( \forall i=0, \ldots
, n-1 \dvt\sup
_{s\in[t_i,t_{i+1})} |X_s -X_{t_i}|\leq\eps\Bigr) = \P( \tau\geq t_1)^n
\leq2^{-n}.
\]
\upqed
\end{pf}

This shows that the small deviation order is always at least as large
as the term induced by the truncated variance process. This fact will
be needed later on.

\begin{lemma}\label{lem:flargeru}
Let $F$ be a function that increases to infinity at zero. If, for some
L\'evy process $X$, for $t\leq t_0$ and $\eps<\eps_0$,
\[
-\log\P( \Vert X\Vert_t \leq\eps) \leq F(\eps) t,
\]
then, for some absolute constant $c>0$ and all $\eps>0$ small enough,
\[
\eps^{-2} U(\eps) \leq c \bigl(F(\eps)+1\bigr).
\]
\end{lemma}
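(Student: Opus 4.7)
The plan is to apply the preceding lemma to a truncation of $X$ and combine it with a direct ``no large jump'' estimate, then optimize the time parameter $t$.

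Decompose $X=X^{(2\eps)}+Y^{(2\eps)}$, with $X^{(2\eps)}$ (L\'evy measure restricted to $[-2\eps,2\eps]$) independent of the compound-Poisson $Y^{(2\eps)}$ of intensity $\bar\Pi(2\eps)$. Since $\{||X||_t\leq\eps\}$ excludes every jump of absolute value exceeding $2\eps$, one immediately has $\P(||X||_t\leq\eps)\leq e^{-\bar\Pi(2\eps)t}$, whence the hypothesis gives $\bar\Pi(2\eps)\leq F(\eps)$. Conditioning on whether $Y^{(2\eps)}$ has a jump in $[0,t]$, invoking the preceding lemma for $X^{(2\eps)}$ with its parameter taken as $2\eps$, and using independence yields, with $V_2:=\sigma^2+\int_{-2\eps}^{2\eps}x^2\,\Pi(\d x)$,
\begin{equation*}
 \P(||X||_t\leq\eps)\leq e^{-V_2 t/(48\eps^2)-\bar\Pi(2\eps)t+1}+\bigl(1-e^{-\bar\Pi(2\eps)t}\bigr).
\end{equation*}

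Now set $t:=c_0/F(\eps)$ for an absolute constant $c_0>0$ chosen small enough that $2c_0 e^{c_0}\leq 1$. Since $\bar\Pi(2\eps)\leq F(\eps)$, this forces $1-e^{-\bar\Pi(2\eps)t}\leq\bar\Pi(2\eps)t\leq\tfrac{1}{2}e^{-F(\eps)t}$, so the second term above absorbs into half of the hypothesis lower bound $\P(||X||_t\leq\eps)\geq e^{-F(\eps)t}$. Comparing logarithms and using $1/t\leq F(\eps)/c_0$ produces $V_2/\eps^2+\bar\Pi(2\eps)\leq C\,F(\eps)$ for an absolute $C$. The elementary shell estimate $\bar\Pi(\eps)\leq\bar\Pi(2\eps)+\eps^{-2}\int_{\eps<|x|\leq 2\eps}x^2\,\Pi(\d x)\leq\bar\Pi(2\eps)+\eps^{-2}V_2$ together with $\int_{-\eps}^{\eps}x^2\,\Pi\leq V_2$ then gives
\begin{equation*}
 \eps^{-2}U(\eps)=\bar\Pi(\eps)+\eps^{-2}\Bigl(\sigma^2+\int_{-\eps}^{\eps}x^2\,\Pi(\d x)\Bigr)\leq\bar\Pi(2\eps)+2V_2/\eps^2\leq c\,F(\eps).
\end{equation*}
The ``$+1$'' in the statement accommodates the finitely many $\eps$ for which $c_0/F(\eps)>t_0$, where one falls back on $t=t_0$.

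The main obstacle is the interplay between the two natural scales $\eps$ and $2\eps$: truncating at scale $\eps$ with a fixed $t$ forces the hypothesis to be used at scale $\eps/2$ and so produces a conclusion only in terms of $F(\eps/2)$, not $F(\eps)$. Truncating instead at scale $2\eps$ and choosing $t$ of order $1/F(\eps)$ kills the large-jump error $1-e^{-\bar\Pi(2\eps)t}$ without any loss in the scale of $\eps$, while the shell estimate recovers $\bar\Pi(\eps)$ itself from the $x^2$-integral over $\{\eps<|x|\leq 2\eps\}$.
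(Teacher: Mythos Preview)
Your argument is correct and follows the paper's route: truncate at level $2\eps$, apply the preceding lemma to the truncated process, and pass from scale $2\eps$ back to $\eps$. One simplification you overlooked: since you already observed that $\{\|X\|_t\leq\eps\}$ forces $Y^{(2\eps)}$ to have no jump on $[0,t]$, your second summand $1-e^{-\bar\Pi(2\eps)t}$ is in fact zero, and the paper exploits this to write the \emph{equality} $\P(\|X\|_t\leq\eps)=e^{-\bar\Pi(2\eps)t}\,\P(\|X^{(2\eps)}\|_t\leq\eps)$ and take logarithms directly for arbitrary $t\leq t_0$, avoiding your tuning $t=c_0/F(\eps)$ altogether; for the scale change the paper quotes Lemma~5.1 of \cite{AD09} ($U(\eps)/\eps^2\approx U(2\eps)/(2\eps)^2$), whereas your elementary shell estimate reproves the needed direction self-containedly.
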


\begin{pf}
We use the assumption together with the fact that if $\Vert X\Vert
_t\leq\eps
$, then $X$ must not have jumps larger than $2\eps$ and the previous lemma,
\[
\mathrm{e}^{-F(\eps)t} \leq\P(\Vert X\Vert_t\leq\eps) = \mathrm
{e}^{-\bar\Pi(2\eps) t} \P
(\Vert X'\Vert_t\leq\eps) \leq\mathrm{e}^{-\bar\Pi(2\eps) t}
\mathrm{e}^{- (2\eps)^{-2}
(\int_{-2\eps}^{2\eps} x^2 \Pi(\dd x) + \sigma^2 ) t/12+1},
\]
where $X'$ has L\'evy measure $\Pi$ restricted to $[-2\eps,2\eps]$.
Noting that Lemma~5.1 of Aurzada and Dereich \cite{AD09} implies that
$U(\eps)/\eps^2 \approx U(2\eps)/(2\eps)^2$, the statement of the lemma
is proved.
\end{pf}

The lower bound in the LIL comes from the following lemma.

\begin{lemma}\label{lem:lower}
Let $F$ be a function that increases to infinity at zero such that,
for all $t\leq t_0$ and $\eps\leq\eps_0$,
\[
\lambda F(\eps)t \leq-\log\P(\Vert X\Vert_t\leq\eps),
\]
and, for $\lambda>0$, we set $b_\lambda(t):=F^{-1} (\frac{\log|\log
t|}{\lambda t} )$. Then, for any $\lambda'<\lambda$,
\[
1\leq\liminf_{t\rightarrow0}\frac{\Vert X\Vert_t}{b_{\lambda
'}(t)}\qquad\mbox{a.s.}
\]
\end{lemma}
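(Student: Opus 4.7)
The plan is a first Borel--Cantelli argument along a geometric subsequence, followed by a monotonicity interpolation. Fix $\lambda'<\lambda$, pick an intermediate value $\lambda''\in(\lambda',\lambda)$, then pick $\theta\in(\lambda'/\lambda'',1)$ and set $t_n:=\theta^n$.

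First, I would apply the hypothesis at $\eps=b_{\lambda''}(t_n)$. Using the defining relation $F(b_{\lambda''}(t_n))=\log|\log t_n|/(\lambda'' t_n)$, the lower small deviation bound gives
\[
\P\bigl(\|X\|_{t_n}\leq b_{\lambda''}(t_n)\bigr)\leq \exp\bigl(-\lambda F(b_{\lambda''}(t_n))\,t_n\bigr)=|\log t_n|^{-\lambda/\lambda''}.
\]
Since $|\log t_n|=n|\log\theta|$ and $\lambda/\lambda''>1$, the series $\sum_n \P(\|X\|_{t_n}\leq b_{\lambda''}(t_n))$ converges. The first Borel--Cantelli lemma (which needs no independence) then yields, almost surely, $\|X\|_{t_n}>b_{\lambda''}(t_n)$ for all $n\geq N(\omega)$.

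Second, I would interpolate. For $t\in[t_{n+1},t_n]$, monotonicity of the running supremum gives $\|X\|_t\geq \|X\|_{t_{n+1}}$, and $b_{\lambda'}$ is nondecreasing in $t$ for small $t$ (because $\log|\log t|/(\lambda' t)$ is decreasing there and $F^{-1}$ is decreasing), so $b_{\lambda'}(t)\leq b_{\lambda'}(t_n)$. Combining,
\[
\frac{\|X\|_t}{b_{\lambda'}(t)}\;\geq\;\frac{\|X\|_{t_{n+1}}}{b_{\lambda'}(t_n)}\;>\;\frac{b_{\lambda''}(t_{n+1})}{b_{\lambda'}(t_n)}.
\]
Since $F^{-1}$ is decreasing, the inequality $b_{\lambda''}(t_{n+1})\geq b_{\lambda'}(t_n)$ is equivalent to
\[
\frac{\lambda'}{\lambda''}\cdot\frac{t_n}{t_{n+1}}\;\leq\;\frac{\log|\log t_n|}{\log|\log t_{n+1}|}.
\]
With $t_n/t_{n+1}=1/\theta$ and $\log|\log t_n|/\log|\log t_{n+1}|\to 1$, the choice $\theta>\lambda'/\lambda''$ makes this hold for all sufficiently large $n$, yielding $\liminf_{t\to 0}\|X\|_t/b_{\lambda'}(t)\geq 1$ almost surely.

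The only delicate point is the interpolation, because $F$ is merely assumed to be increasing to infinity, with no regularity at all. What makes it work is that the factor $\log|\log t_n|$ appearing in the argument of $F^{-1}$ moves very slowly between $t_{n+1}$ and $t_n$, so the geometric gap of factor $1/\theta$ in the $t$-variable can be absorbed into an arbitrarily small worsening of the parameter from $\lambda'$ to $\lambda''$, without invoking any property of $F$ itself.
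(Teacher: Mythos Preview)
Your proof is correct and follows the same first Borel--Cantelli plus geometric-subsequence interpolation as the paper. The only difference is bookkeeping: the paper applies the small deviation bound directly at the pair $(t,\eps)=(r^{n+1},b_{\lambda'}(r^n))$ with $r$ chosen so that $\lambda r/\lambda'>1$, which makes the interpolation step $\|X\|_t/b_{\lambda'}(t)\geq \|X\|_{r^{n+1}}/b_{\lambda'}(r^n)\geq 1$ immediate and avoids your auxiliary parameter $\lambda''$ and the extra comparison $b_{\lambda''}(t_{n+1})\geq b_{\lambda'}(t_n)$.
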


\begin{pf}
For any $\lambda'<\lambda$, we can find $0<r<1$ such that $1< \lambda
r/\lambda'$. Note that
\[
\sum_{n}\P\bigl(\Vert X\Vert_{r^{n+1}}\leq b_{\lambda'}(r^n) \bigr
)<\infty
\]
since
\begin{equation}\label{est}
-\log\P\bigl(\Vert X\Vert_{r^{n+1}}\leq b_{\lambda'}(r^n) \bigr
)\geq\lambda F(
b_{\lambda'}(r^n)) r^{n} r = \lambda\frac{r}{\lambda'} \log|\log
r^n| =
\log n^{r \lambda/\lambda'} + {\rm const.}
\end{equation}
Hence, by the Borel--Cantelli lemma,
\[
\{n\dvt\Vert X\Vert_{r^{n+1}}\leq b_{\lambda'}(r^{n}) \}
\]
is almost surely a finite set. Thus, for each path $\omega$, we have
that, for any $n\geq n_{0}(\omega)$ and any $t\in[r^{n+1},r^{n})$,
\[
\frac{\Vert X\Vert_{t}}{b_{\lambda'}(t)}\geq\frac{\Vert X\Vert
_{r^{n+1}}}{b_{\lambda
'}(r^{n})}\geq1,
\]
as $b_{\lambda'}$ is an increasing function. We take $\liminf_{t\to0}$
to obtain the statement.
\end{pf}

The proof of the upper bound in the LIL requires the following lemma.
\begin{lemma}\label{L2}
Let $F$ be a function that increases to infinity at zero such that for
all $t\leq t_0$ and $\eps\leq\eps_0$
\[
-\log\P(\Vert X\Vert_t\leq\eps) \leq\lambda F(\eps) t
\]
and, for $\lambda>0$, set $b_\lambda(t):=F^{-1} (\frac{\log|\log
t|}{\lambda t} )$. Assume that
\begin{equation}\label{mladen}
\limsup_{n\to\infty}\frac{\Vert X\Vert_{(n+1)^{-(n+1)^{\beta
}}}}{b_{\lambda}
(n^{-n^{\beta}} )}=0\qquad\mbox{a.s.}
\end{equation}
for all $\beta>1$. Then, for any $\lambda'>\lambda$,
\begin{equation} \label{eqn:upper1ml}
\liminf_{t\to0}\frac{\Vert X\Vert_{t}}{b_{\lambda'}(t)}\leq
1\qquad\mbox{a.s.}
\end{equation}

% In particular, if (\ref{mladen}) holds for all $\beta>1$ then (
\end{lemma}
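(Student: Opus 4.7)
The plan is to choose $t_n := n^{-n^{\beta}}$ for some $\beta>1$ slightly larger than $1$, decompose $\|X\|_{t_n}$ as the maximum of $\|X\|_{t_{n+1}}$ and $\sup_{s\in[t_{n+1},t_n]}|X_s|$, and control these two pieces via the two halves of the Borel--Cantelli lemma. Fix $\lambda'>\lambda$, pick an auxiliary $\lambda_0\in(\lambda,\lambda')$, and take $\beta:=\tfrac12(1+\lambda_0/\lambda)\in(1,\lambda_0/\lambda)$. The centered increment suprema
\begin{align*}
Y_n := \sup_{s\in[t_{n+1},t_n]} |X_s-X_{t_{n+1}}|
\end{align*}
are independent across $n$, since they depend only on the increments of $X$ over the disjoint intervals $(t_{n+1},t_n]$, and by stationarity $Y_n\stackrel{d}{=}\|X\|_{t_n-t_{n+1}}$.

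The first main step is a second Borel--Cantelli argument for the independent events $A_n:=\{Y_n\leq b_{\lambda_0}(t_n)\}$. Using the small-deviation hypothesis and the identity $F(b_{\lambda_0}(t_n))=\log|\log t_n|/(\lambda_0 t_n)$,
\begin{align*}
\P(A_n) \;\geq\; \exp\bigl(-\lambda F(b_{\lambda_0}(t_n))(t_n-t_{n+1})\bigr) \;\geq\; |\log t_n|^{-\lambda/\lambda_0} \;\approx\; (n^\beta\log n)^{-\lambda/\lambda_0}.
\end{align*}
Since $\beta\lambda/\lambda_0<1$ by the choice of $\beta$, we have $\sum_n \P(A_n)=\infty$, and by independence $A_n$ occurs for infinitely many $n$ almost surely.

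The second main step controls $\|X\|_{t_{n+1}}$ itself via a first Borel--Cantelli estimate. The same hypothesis together with $1-e^{-x}\leq x$ yields
\begin{align*}
\P\bigl(\|X\|_{t_{n+1}} > b_{\lambda'}(t_n)\bigr) \;\leq\; \lambda F(b_{\lambda'}(t_n))\,t_{n+1} \;=\; \frac{\lambda\log|\log t_n|}{\lambda'}\cdot\frac{t_{n+1}}{t_n},
\end{align*}
which is summable in $n$ because $t_{n+1}/t_n$ decays super-polynomially as soon as $\beta>1$. Hence $\|X\|_{t_{n+1}}\leq b_{\lambda'}(t_n)$ for every $n$ large enough, almost surely. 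Combining this with (\ref{mladen}) in the form $|X_{t_{n+1}}|\leq\eta_n b_{\lambda_0}(t_n)$ for some $\eta_n\to 0$, the triangle inequality on $A_n$ gives
\begin{align*}
\|X\|_{t_n} \;\leq\; \max\bigl(\|X\|_{t_{n+1}},\, |X_{t_{n+1}}|+Y_n\bigr) \;\leq\; \max\bigl(b_{\lambda'}(t_n),\,(1+\eta_n)b_{\lambda_0}(t_n)\bigr),
\end{align*}
and since $b_{\lambda_0}\leq b_{\lambda'}$ this forces $\|X\|_{t_n}/b_{\lambda'}(t_n)\leq 1+\eta_n\to 1$ along the infinite subsequence where $A_n$ holds, proving (\ref{eqn:upper1ml}).

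The main obstacle will be the balancing act between $\beta$ and $\lambda_0$: the second Borel--Cantelli step needs $\beta\lambda/\lambda_0\leq 1$, whereas the first Borel--Cantelli step and hypothesis (\ref{mladen}) both demand $\beta>1$. The window $(1,\lambda_0/\lambda)$ is non-empty precisely because the conclusion allows any $\lambda'>\lambda$, giving one room to take $\lambda_0>\lambda$ strictly. A second delicate point is that (\ref{mladen}) is indispensable for the sharp limiting constant $1$: if one dropped it and instead used only $|X_{t_{n+1}}|\leq\|X\|_{t_{n+1}}\leq b_{\lambda'}(t_n)$ from the first Borel--Cantelli step, the same computation would merely yield $\liminf\leq 2$.
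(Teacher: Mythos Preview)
Your proof is correct and follows the same skeleton as the paper: apply the second Borel--Cantelli lemma to the independent increment suprema $Y_n=\sup_{s\in[t_{n+1},t_n]}|X_s-X_{t_{n+1}}|$ along $t_n=n^{-n^\beta}$, and combine with assumption~(\ref{mladen}) via the triangle inequality.

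The one substantive difference is your additional first Borel--Cantelli step showing $\|X\|_{t_{n+1}}\le b_{\lambda'}(t_n)$ eventually. The paper instead asserts directly that (\ref{mladen}) implies $\limsup_n \|X\|_{t_{n+1}}/b_{\lambda'}(t_n)=0$, justified only by ``$b_\lambda(t)$ is increasing in $\lambda$''. That monotonicity lets one replace $b_\lambda$ by $b_{\lambda'}$ in the denominator, but it does \emph{not} upgrade $|X_{t_{n+1}}|$ to $\|X\|_{t_{n+1}}$ in the numerator --- so the paper's passage is, as written, unjustified. Your extra step patches this cleanly using nothing more than the small-deviation hypothesis already at hand; the price is that you only get $\|X\|_{t_{n+1}}/b_{\lambda'}(t_n)\le 1$ rather than $\to 0$, which is why you split the supremum as a $\max$ rather than use the paper's cruder additive bound $\|X\|_{t_n}\le Y_n+2\|X\|_{t_{n+1}}$. (Incidentally, the intermediate $\lambda_0$ is harmless but not strictly necessary: running the $A_n$ events with $b_{\lambda'}$ itself and $\beta\in(1,\lambda'/\lambda)$ gives the same conclusion.)
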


\begin{pf}
For $\lambda'>\lambda$, we choose $\beta>1$ such that $\lambda
'>\lambda\beta$. First note that (\ref{mladen}) implies
\begin{equation} \label{eqn:suparg1}
\limsup_{n\to\infty}\frac{\Vert X\Vert_{(n+1)^{-(n+1)^{\beta
}}}}{b_{\lambda'}
(n^{-n^{\beta}} )}=0\qquad\mbox{a.s.},
\end{equation}
as $b_{\lambda}(t)$ is an increasing function in $\lambda$ for fixed
$t\geq0$.
Using the L\'evy property, we see the following:
\begin{eqnarray*}
&& \sum_{n}\P\Bigl(\sup_{(n+1)^{-(n+1)^{\beta}}\leq t <
n^{-n^{\beta}}}\bigl|X_{t}-X_{(n+1)^{-(n+1)^{\beta}}}\bigr|\leq
b_{\lambda'}
(n^{-n^{\beta}} ) \Bigr)\\
&&\quad=\sum_{n}\P\bigl(\Vert X\Vert_{n^{-n^{\beta
}}-(n+1)^{-(n+1)^{\beta}}}\leq
b_{\lambda'} (n^{-n^{\beta}} ) \bigr)\\
&&\quad\geq\sum_{n}\P\bigl(\Vert X\Vert_{n^{-n^{\beta}}}\leq
b_{\lambda'}
(n^{-n^{\beta}} ) \bigr)=\infty.
\end{eqnarray*}
The last step follows as in (\ref{est}) since now $\lambda\beta
/\lambda
'<1$. The Borel--Cantelli lemma shows that the sequence of independent events
\[
A_{n}= \Bigl\{\sup_{(n+1)^{-(n+1)^{\beta}}\leq t < n^{-n^{\beta
}}}\bigl|X_{t}-X_{(n+1)^{-(n+1)^{\beta}}}\bigr|\leq b_{\lambda'}
(n^{-n^{\beta}}
) \Bigr\}
\]
satisfies $\P(A_{n} \mbox{ i.o.})=1$. To reduce to the supremum, note
that
\[
\frac{\Vert X\Vert_{n^{-n^{\beta}}}}{b_{\lambda'}(n^{-n^{\beta
}})}\leq\frac
{\sup_{(n+1)^{-(n+1)^{\beta}}\leq t < n^{-n^{\beta
}}}|X_{t}-X_{(n+1)^{-(n+1)^{\beta}}}|}{b_{\lambda'}(n^{-n^{\beta
}})}+\frac{2\Vert X\Vert_{(n+1)^{-(n+1)^{\beta}}}}{b_{\lambda
'}(n^{-n^{\beta}})},
\]
and therefore, by (\ref{eqn:suparg1}),
\[
\liminf_{n\to\infty}\frac{\Vert X\Vert_{n^{-n^{\beta
}}}}{b_{\lambda
'}(n^{-n^{\beta}})}\leq\liminf_{n\to\infty}\frac{\sup
_{(n+1)^{-(n+1)^{\beta}}\leq t < n^{-n^{\beta
}}}|X_{t}-X_{n^{-(n+1)^{\beta}}}|}{b_{\lambda'}(n^{-n^{\beta}})}\leq1.
\]
This shows (\ref{eqn:upper1ml}).
\end{pf}

Now we are in position to prove Theorem~\ref{t3}. For a detailed
analysis of the $\limsup$ case, we refer to Savov \cite{S09}.

\begin{pf*}{Proof of Theorem~\ref{t3}}
The claim follows from Lemmas~\ref{lem:lower} and~\ref{L2}. To verify
the use of Lemma~\ref{L2} we still need to check that condition (\ref
{mladen}) holds for all $\beta>1$.

We fix $\beta>1$ and $\lambda'_2>\lambda_2$. Since $\lambda'_2$ is
fixed, we set $b:=b_{\lambda'_2}$ in order to increase readability. We
define the auxiliary function
\[
h(t)=b(\phi(t)),
\]
where $\phi(t)$ is chosen such that $\phi((\frac
{t}{t+1})^{((t+1)/t)^{\beta}} )=t^{1/t^{\beta}}$ and $\phi(0)=0$. Note
that $\phi$ is increasing and that $\phi(s^{-s^\beta
})=(s-1)^{-(s-1)^\beta}$. We also do not record that $\phi$ and $h$
depend on $\beta$ and $\lambda_2'$.

\textit{Step 1}: We show that
\begin{equation}\label{int}
\int_0^{1/2} \bar\Pi( h(t))\,\dd t<\infty.
\end{equation}

First, by the definition of $h$ and a change of variables, we obtain
\begin{eqnarray*}
&&\int_0^{1/2} \bar\Pi( h(t))\,\dd t\\
&&\quad=\int_0^{ C(\beta)} \bar\Pi( b (s^{s^{-\beta}} ) )\frac
{\dd(s/(s+1))^{((s+1)/s)^{\beta}}}{\dd s}\\
&&\quad=\int_0^{ C(\beta)} \bar\Pi( b (s^{s^{-\beta}} ) ) \biggl
(\frac{s}{s+1}
\biggr)^{((s+1)/s)^{\beta}}\biggl(\frac{s+1}{s}\biggr)^{\beta
-1}s^{-2}\bigl(1-\beta\log\bigl(1-(s+1)^{-1}\bigr)\bigr)\,\dd s,
\end{eqnarray*}
which can be estimated from above by
\begin{eqnarray*}
&&C\int_0^{ C(\beta)} \frac{b^{2}(s^{s^{-\beta}})\bar\Pi(
b(s^{s^{-\beta}}))}{b^{2}(s^{s^{-\beta}})} \biggl(\frac{s}{s+1}
\biggr)^{((s+1)/s)^{\beta}}s^{-1-\beta} |\log s|\,\dd s\\
&&\quad\leq C\int_0^{ C(\beta)} \frac{U( b(s^{s^{-\beta
}}))}{b^{2}(s^{s^{-\beta}})} \biggl(\frac{s}{s+1} \biggr
)^{((s+1)/s)^{\beta}}s^{-1-\beta} |\log s|\,\dd s\\
&&\quad\leq C'\int_0^{ C(\beta)} F( b(s^{s^{-\beta}})) \biggl
(\frac{s}{s+1}
\biggr)^{((s+1)/s)^{\beta}}s^{-1-\beta}|\log s|\,\dd s\\
&&\quad=\frac{C'}{\lambda}\int_0^{ C(\beta)}\frac{\log|\log
s^{s^{-\beta}}
|}{s^{s^{-\beta}}} \biggl(\frac{s}{s+1} \biggr)^{((s+1)/s)^{\beta
}}s^{-1-\beta}|\log s|\,\dd s\\
&&\quad\leq\frac{C'}{\lambda}\int_0^{ C(\beta)}s^{-1-\beta} (
\log |\log
s^{s^{-\beta}} | ) s^{((s+1)/s)^{\beta}-1/s^{\beta}}
|\log s|\,\dd s<\infty,
\end{eqnarray*}
where we have used $x^{2}\bar\Pi(x)\leq x^{2}\bar\Pi(x)+\int
_{-x}^{x}y^{2}\Pi(\dd y) + \sigma^2=U(x)\leq c x^2F(x)$ for some
absolute $c>0$ by Lemma~\ref{lem:flargeru} and the definition of $b$.

\textit{Step 2}: We denote by
\begin{equation} \label{eqn:defnAn}
A_{n}:= \bigl\{\mbox{there is at least one jump with modulus
$>b(n^{-n^{\beta}})$ up to time $(n+1)^{-(n+1)^{\beta}}$} \bigr\}
\end{equation}
and show that
\begin{equation}\label{ba}
\sum_n \P(A_{n} )<\infty.
\end{equation}
This comes from (\ref{int}). Indeed, note that $h$ inherits the
monotonicity of $b$ and $\phi$, and hence (\ref{int}) implies that
\begin{eqnarray} \label{eqn:nozerosu}
&&\sum_{n}\bigl((n+1)^{-(n+1)^{\beta}}-(n+2)^{-(n+2)^{\beta}} \bigr
) \bar\Pi\bigl(
h \bigl((n+1)^{-(n+1)^{\beta}} \bigr) \bigr)\nonumber\\ [-8pt]\\ [-8pt]
&&\quad\leq\sum_{n}\int
_{(n+2)^{-(n+2)^{\beta}}}^{(n+1)^{-(n+1)^{\beta}}}\bar\Pi(h(t))\,
\dd
t<\infty.\nonumber
\end{eqnarray}
Using
\begin{eqnarray*}
(n+1)^{-(n+1)^{\beta}}-(n+2)^{-(n+2)^{\beta}}&\sim
&(n+1)^{-(n+1)^{\beta
}},\\
b(n^{-n^{\beta}})&=&h\bigl((n+1)^{-(n+1)^{\beta}}\bigr),
\end{eqnarray*}
and that the sequence $(n+1)^{-(n+1)^{\beta}}\bar\Pi(
h((n+1)^{-(n+1)^{\beta}}))$ tends to zero by (\ref{eqn:nozerosu}), we
obtain that
\[
\P(A_{n} )=1-\mathrm{e}^{-(n+1)^{-(n+1)^{\beta}}\bar\Pi(
b(n^{-n^{\beta
}}))}\sim(n+1)^{-(n+1)^{\beta}}\bar\Pi\bigl(h \bigl
((n+1)^{-(n+1)^{\beta}} \bigr) \bigr)
\]
is summable. Therefore (\ref{ba}) is proved.

\textit{Step 3}: Let us now show how to use (\ref{ba}) to deduce
(\ref{mladen}). Obviously, it suffices to show that
\[
\limsup_{n\rightarrow\infty}\frac{\Vert X\Vert
_{(n+1)^{-(n+1)^{\beta
}}}}{b(n^{-n^{\beta}})}<\eps\qquad \mbox{a.s.}
\]
for any $\eps>0$ and, hence, by the Borel--Cantelli lemma, it suffices
to show that
\[
\sum_n \P\bigl({\Vert X\Vert_{(n+1)^{-(n+1)^{\beta}}}}>\eps{b
(n^{-n^{\beta}} )}
\bigr)<\infty.
\]
Separating jumps of absolute value larger or smaller than $ b
(n^{-n^{\beta}} )$, and, using the definition of $A_n$ in (\ref
{eqn:defnAn}), we obtain that
\begin{eqnarray*}
&&\sum_n \P\bigl({\Vert X\Vert_{(n+1)^{-(n+1)^{\beta}}}}>\eps
{b(n^{-n^{\beta}})} \bigr)\\
&&\quad=\sum_n \P\bigl({\Vert X\Vert_{(n+1)^{-(n+1)^{\beta
}}}}>\eps{b(n^{-n^{\beta}})}
; A^{c}_{n} \bigr) + \sum_n \P\bigl({\Vert X\Vert
_{(n+1)^{-(n+1)^{\beta
}}}}>\eps{b(n^{-n^{\beta}})} ; A_{n} \bigr),
\end{eqnarray*}
which is bounded from above by
\[
\sum_n \P\bigl({\Vert X\Vert_{(n+1)^{-(n+1)^{\beta}}}}>\eps
{b(n^{-n^{\beta}})}
|A^{c}_{n} \bigr)\cdot\P(A^{c}_{n} )+\sum_n
\P(A_{n} ).
\]
The second term is finite by (\ref{ba}); and the first term is bounded by
\begin{equation} \label{eqn:gaptoolarge}
\sum_n \P\bigl(\Vert X\Vert_{(n+1)^{-(n+1)^{\beta}}}>\eps
b(n^{-n^{\beta}})
|A^{c}_{n} \bigr).
\end{equation}
To estimate this sum note that conditionally on $A^{c}_{n}$,
$X_t\stackrel{d}=X_{t}(n)$, where $X(n)$ differs from $X$ only by
removing jumps of size larger than $|b(n^{-n^{\beta}})|$. Clearly, by
Wald's identity,
\begin{equation} \label{g}
\var(X_{t}(n))=t \biggl(\int_{-b(n^{-n^{\beta}})}^{b(n^{-n^{\beta
}})}y^{2}\Pi(\dd y)+\sigma^{2} \biggr) \leq t U(b(n^{-n^{\beta}})).
\end{equation}
Note that
\[
\bigl|\E X_{(n+1)^{-(n+1)^{\beta}}}(n)\bigr| = (n+1)^{-(n+1)^{\beta
}} \biggl| \int
_{|x|>b(n^{-n^\beta})} x \Pi(\dd x) - \gamma\biggr|.
\]
Therefore, by assumption (\ref{eqn:conditionM}), taking also into
account that $|\E X_{t}(n)|=t|\E X_{1}(n)|$, we obtain
\[
\sup_{t\leq(n+1)^{-(n+1)^{\beta}}}|\E X_t(n)|=\bigl|\E
X_{(n+1)^{-(n+1)^{\beta}}}(n)\bigr| = \mathrm{o}( b(n^{-n^{\beta}}) ).
\]
Using the previous relation (first step), Doob's martingale inequality
(second step), (\ref{g}) (third step), Lemma~\ref{lem:flargeru} (fourth
step) and the definition of $b$ (fifth step), we are led to the upper
bound of the term in (\ref{eqn:gaptoolarge}),
\begin{eqnarray*}
&&\sum_n \P\bigl({\Vert X(n)\Vert_{(n+1)^{-(n+1)^{\beta}}}}> \eps
{b(n^{-n^{\beta}})} \bigr)\\
&&\quad\leq\sum_n \P\biggl({\Vert X(n) - \E X(n)\Vert
_{(n+1)^{-(n+1)^{\beta}}}}>
\frac{1}{2} \eps{b(n^{-n^{\beta}})} \biggr)\\
&&\quad\leq\sum_n \frac{ 4 \E|X_{(n+1)^{-(n+1)^{\beta}}}(n) - \E
X_{(n+1)^{-(n+1)^{\beta}}}(n)|^2 }{(\eps/2)^2 b(n^{-n^{\beta}})^2}\\
&&\quad\leq\sum_n\frac{4 {(n+1)^{-(n+1)^{\beta}}}U (b(n^{-n^{\beta
}}) )}{(\eps/2)^2b(n^{-n^{\beta}})^2}\\
&&\quad\leq\sum_n\frac{4 {(n+1)^{-(n+1)^{\beta}}}C\cdot F
(b(n^{-n^{\beta
}}) )}{(\eps/2)^2}\\
&&\quad=\frac{C'}{\lambda\eps^2}\sum_n\frac{{(n+1)^{-(n+1)^{\beta
}}}\log
|\log n^{-n^{\beta}}|}{n^{-n^{\beta}}}<\infty,
\end{eqnarray*}
where we used the definition of $b$ in the last step. Thus, the term in
(\ref{eqn:gaptoolarge}) is finite, as required.
\hspace*{1pt}
\end{pf*}

\begin{pf*}{Proof of Corollary~\ref{cor:t1corollary}}
If $F$ is regularly varying so is $b_{\lambda}$; see Bingham, Goldie
and Teugels \cite{bgt}, Proposition~1.5.7. Now note that if $F$ is
regularly varying with exponent $-\alpha<0$, we have
\begin{eqnarray*}
b_{\lambda}(t)&=&F^{-1}(\log|\log t|/\lambda t)\\
&\sim&\lambda^{{1/\alpha}} F^{-1}(\log|\log t|/ t)\\
&=&\lambda^{{1/\alpha}}b_1(t).
\end{eqnarray*}
Hence, the statement of Theorem~\ref{t3} reads
\[
(\lambda'_1)^{{1/\alpha}}\leq\liminf_{t\rightarrow0}\frac
{\Vert X\Vert_{t}}{b_{1}(t)}\leq(\lambda'_2)^{{1/\alpha}}\qquad
\mbox{a.s.}
\]
for all $\lambda_1'<\lambda_1$ and $\lambda_2'>\lambda_2$. Taking the
limits on both sides, we obtain
\[
(\lambda_1)^{{1/\alpha}}\leq\liminf_{t\rightarrow0}\frac
{\Vert X\Vert_{t}}{b_{1}(t)}\leq(\lambda_2)^{{1/\alpha}}\qquad
\mbox{ a.s.}
\]
Applying the regular variation argument in the reverse direction yields
the claim.
\end{pf*}

\begin{pf*}{Proof of Corollary~\ref{cor:sddirectsymmetric}}
This follows directly from Theorem~\ref{t3}. The bounds on the
constants can be obtained from the absolute constants in
Proposition~\ref{prop:ad}.
\end{pf*}

\begin{pf*}{Proof of Theorem~\ref{t}}
Lemma~\ref{lem:lower} gives the lower LIL of the theorem.
Unfortunately, the arguments for the proof of Theorem~\ref{t3} do not
apply here. Hence, for the reverse direction, we show more directly
that the given norming function of the LIL implies the rate function of
the small deviations. The following arguments go back to Kesten. The
proof is via contradiction, assuming that
\begin{equation}\label{eqn:ass}
\liminf_{t\to0}\frac{\Vert X\Vert_{t}}{b_{\lambda_2'}(t)}>\frac
{2}{C}+\delta
\end{equation}
for some $\delta>0$ and $\lambda_2'>\lambda_2$. We show that under this
assumption we can derive, for sufficiently large $l$, the estimates
\begin{eqnarray}\label{eqn:schonwiedera}
1&\geq&\sum_{n\geq l}\P\biggl(\frac{\Vert X\Vert
_{r^{j}-r^{n}}}{b_{\lambda
_2'}(r^{j}-r^{n})}>\frac{2}{C};\mbox{for all $l\leq j\leq n-1$}
\biggr)\P
\bigl(\Vert X\Vert_{r^{n}}\leq b_{\lambda_2'}(r^{n}) \bigr)\\\label
{eqn:schonwiederb}
&\geq&\frac{1}{2}\sum_{n\geq l}\P\bigl(\Vert X\Vert_{r^{n}}\leq
b_{\lambda
_2'}(r^{n}) \bigr)
\end{eqnarray}
which is a contradiction as, by the choice of $b_{\lambda_2'}$ and the
small deviation\vspace*{1pt} rate (\ref{eqn:yetanothersdestimate}), the sum in
(\ref
{eqn:schonwiederb}) is infinite. First, let us derive estimate (\ref
{eqn:schonwiedera}) for which Assumption (\ref{eqn:ass}) is not needed.
For any fixed integer $l$ partitioning the probability space, we obtain
\begin{eqnarray*}
1&\geq&\sum_{n\geq l}\P\bigl(\Vert X\Vert_{r^{j}}>b_{\lambda
_2'}(r^{j})\mbox{ for
all $l\leq j\leq n-1$};\Vert X\Vert_{r^{n}}\leq b_{\lambda_2'}(r^{n})
\bigr)\\
&\geq&\sum_{n\geq l}\P\Bigl(\sup_{r^{n}\leq
s<r^{j}}|X_{s}|>b_{\lambda
_2'}(r^{j})\mbox{ for all $l\leq j\leq n-1$};\Vert X\Vert_{r^{n}}\leq
b_{\lambda_2'}(r^{n}) \Bigr).
\end{eqnarray*}
In order to employ the independence of increments of $X$ we estimate
from below by
\[
\sum_{n\geq l}\P\Bigl(\sup_{r^{n}\leq s<r^{j}}|X_{s}-X_{r^{n}}|>2
b_{\lambda_2'}(r^{j})\mbox{ for all $l\leq j\leq
n-1$};\Vert X\Vert_{r^{n}}\leq b_{\lambda_2'}(r^{n}) \Bigr)
\]
which equals
\begin{eqnarray*}
&&\sum_{n\geq l}\P\bigl(\Vert X\Vert_{r^{j}-r^{n}}>2 b_{\lambda
_2'}(r^{j})\mbox{ for all $l\leq j\leq n-1$} \bigr)\P
\bigl(\Vert X\Vert_{r^{n}}\leq b_{\lambda_2'}(r^{n}) \bigr)\\
&&\quad=\sum_{n\geq l}\P\biggl(\frac{\Vert X\Vert
_{r^{j}-r^{n}}}{b_{\lambda
_2'}(r^{j}-r^{n})}>2 \frac{b_{\lambda_2'}(r^{j})}{b_{\lambda
_2'}(r^{j}-r^{n})}\mbox{ for all $l\leq j\leq n-1$} \biggr)\P
\bigl(\Vert X\Vert_{r^{n}}\leq b_{\lambda_2'}(r^{n}) \bigr).
\end{eqnarray*}
By the monotonicity of $b_{\lambda_2'}$, this yields the lower bound
\[
\sum_{n\geq l}\P\biggl(\frac{\Vert X\Vert
_{r^{j}-r^{n}}}{b_{\lambda
_2'}(r^{j}-r^{n})}>2 \frac{b_{\lambda_2'}(r^{j})}{b_{\lambda
_2'}(r^{j}-r^{j+1})};\mbox{for all $l\leq j\leq n-1$} \biggr)\P
\bigl(\Vert X\Vert_{r^{n}}\leq b_{\lambda_2'}(r^{n}) \bigr).
\]
Finally, we utilize the regularity of $b_{\lambda_2'}$ from (\ref
{eqn:regularityofb}) to obtain the lower bound
\[
\sum_{n\geq l}\P\biggl(\frac{\Vert X\Vert
_{r^{j}-r^{n}}}{b_{\lambda
_2'}(r^{j}-r^{n})}>\frac{2}{C};\mbox{for all $l\leq j\leq n-1$}
\biggr)\P
\bigl(\Vert X\Vert_{r^{n}}\leq b_{\lambda_2'}(r^{n}) \bigr).
\]
As required, we derived Estimate (\ref{eqn:schonwiedera}).

Assuming (\ref{eqn:ass}) we now derive Estimate (\ref
{eqn:schonwiederb}). The assumption directly shows that
\[
\lim_{t\to0}\P\biggl(\bigcap_{s\leq t} \{\Vert X\Vert_{s}\geq2C^{-1}
b_{\lambda_2'}(s) \} \biggr)=1
\]
which implies that we may choose $l$ large enough such that
\[
\P\biggl(\frac{\Vert X\Vert_{r^{j}-r^{n}}}{b_{\lambda
_2'}(r^{j}-r^{n})}>\frac
{2}{C};\mbox{for all $l\leq j\leq n-1$} \biggr)\geq\P\biggl
(\bigcap_{s\leq r^{l}}
\{\Vert X\Vert_{s}\geq2C^{-1} b_{\lambda_2'}(s) \} \biggr)\geq
\frac{1}{2}.
\]
Hence, we derived estimate (\ref{eqn:schonwiederb}) so that the proof
is complete.
\end{pf*}

\begin{pf*}{Proof of Corollary~\ref{cor:t1corollary2}}
This is
completely analogous to the proof of Corollary~\ref{cor:t1corollary}.
\end{pf*}

\begin{pf*}{Proof of Corollary~\ref{cort2}}
We use Proposition~\ref
{prop:ad} and Theorem~\ref{t3}. In order to do so, we have to see that
the term $\eps u_\eps$ in (\ref{eqn:sdquantityadstrengthend}) has no
influence on the order. We apply Lemma~\ref{lem:lower} and follow the
proof of Theorem~\ref{t} with the scaling
\[
t=r^n \quad\mbox{and}\quad\eps=b(r^n)
\]
and with the sequence $n^{-n^\beta}$, respectively. Therefore, it is
sufficient to show that
\[
\eps u_\eps=\mathrm{o}( t F(\eps))
\]
with the above scalings of $t$ and $\eps$. Since $\eps=b(t)$ and thus
$t\sim F(\eps)^{-1} \log\log F(\eps)$, we need to show that
\[
\eps u_\eps=\mathrm{o}( \log\log F(\eps)).
\]
As this is precisely what we stated in condition (\ref
{eqn:cond-esschervanishes}), the proof is complete.
\end{pf*}

\begin{pf*}{Proof of Proposition~\ref{prop:bvnodrift}}
As $X$ is of bounded variation, the representation
\[
X_t=A^1_t-A^2_t+ct
\]
holds with two independent pure jump subordinators $A^1, A^2$. Next, we
use the simple observation
\[
\frac{|X_t|}{t}\leq\frac{\Vert X\Vert_t}{t}\leq\frac
{\Vert A^1\Vert_t+\Vert A^2\Vert_t+|c|t}{t}=\frac{A^1_t}{t}+\frac
{A^2_t}{t}+|c|
\]
to conclude the proof. The left-hand side converges to $|c|$, as $X$
has bounded variation (see Theorem~39 of Doney \cite{D}). Finally, the
right-hand side converges to $|c|$ as $|A^i_t|/t$ converge at zero
almost surely to their drift (see Proposition~5 of Doney \cite{D}).
\end{pf*}

%generality we assume $\sigma=0$, as otherwise Corollary~
%$$
% -\gamma_\eps:=\int_{[-1,1]\setminus[-\eps,\eps]} x \Pi(\dd x) -
%$$
%If the left hand side is positive, we must have $u_\eps>0$. In this
%case, the right hand side can be estimated as follows:
%$$
% \int_0^{\eps} x (e^{u_\eps x}-1) \Pi(\dd x) + \int_{-\eps}^0 x \cdot
%(e^{u_\eps x}-1) \Pi(\dd x) \geq\int_0^{\eps} x \cdot u_\eps x \Pi(\dd
%x).
%$$
%This shows
%$$
% \int_0^{\eps} x^2 \Pi(\dd x) u_\eps\leq\int_{[-1,1]\setminus[-
%$$
%Analogous arguments for $-\gamma_\eps<0$ together with the above
%relation show
%$$
% \int_{[0,\sign(u_\eps) \eps]} x^2 \Pi(\dd x) |u_\eps| \leq|
%$$
%From this relation, we get
%$$
% \eps|u_\eps|
% \leq\eps\frac{ |\int_{[-1,1]\setminus[-\eps,\eps]} x \Pi(\dd
%x) - \gamma|}{
% \int_{[0,\sign(u_\eps) \eps]} x^2 \Pi(\dd x)}
% \leq c =o(\log\log F(\eps)).
%$$
%and so it is clear that (\ref{eqn:easycdr}) implies (

\section*{Acknowledgements}
The first author was supported by the DFG Emmy Noether Programme. The
second author was supported by the EPSRC Grant EP/E010989/1. The third
author acknowledges the support of New College, Oxford. We thank Thomas
Simon (Lille) for pointing out the reference Wee \cite{W88} to us.

% imsref loaded by svajune.rapalyte, 2011-12-03 14:14:33
%

\printhistory

\end{document}